\theoremstyle{plain}
\newtheorem{theorem}{Theorem}
\newtheorem{prop}[theorem]{Proposition}
\newtheorem{lemma}[theorem]{Lemma}
\newtheorem{fact}[theorem]{Fact}
\theoremstyle{definition}
\newtheorem{rem}[theorem]{Remark}
\newtheorem{exmp}[theorem]{Example}
\DeclareMathOperator{\sts}{STS}
\DeclareMathOperator{\psts}{PSTS}
\begin{document}

\title{Point-line geometries related to binary equidistant codes}
\author{Mark Pankov, Krzysztof Petelczyc, Mariusz \.Zynel}
\keywords{simplex code, equidistant code, Pasch configuration, Cremona-Richmond configuration, partial Steiner triple system}
\subjclass[2020]{51E20,51E22}
\address{Mark Pankov: Faculty of Mathematics and Computer Science, 
University of Warmia and Mazury, S{\l}oneczna 54, 10-710 Olsztyn, Poland}
\email{pankov@matman.uwm.edu.pl}
\address{Krzysztof Petelczyc, Mariusz \.Zynel: Faculty of Mathematics, University of Bia{\l}ystok, Cio{\l}kowskiego 1M, 15-245 Bia{\l}ystok, Poland}
\email{kryzpet@math.uwb.edu.pl, mariusz@math.uwb.edu.pl}
\maketitle

\begin{abstract}
Point-line geometries whose singular subspaces correspond to binary equidistant codes
are investigated. 
The main result is a description of automorphisms of these geometries. 
In some important cases, automorphisms induced by non-monomial linear automorphisms surprisingly arise.
\end{abstract}                                             

\section{Introduction}
Graphs and geometries related to various types of linear codes are considered in \cite{CGK,CG,KP1,KP2,KPP,KP-dim2,Pank}.
In the present paper, we investigate point-line geometries whose maximal singular subspaces correspond to equivalence classes of binary equidistant 
codes (not necessarily non-degenerate). 

The main examples of equidistant codes are simplex codes.
These codes are interesting for many reasons.
They are dual to Hamming codes and are first-order Reed-Muller codes \cite[Subsection 1.2.2]{TVN}. 
The name comes from the property that the code words of a binary $k$-dimensional simplex code form a $(2^k-1)$-simplex of constant edge length if the code words are interpreted as points of ${\mathbb R}^{2^k-1}$
(simplex codes over fields with more than two elements lose this property). 
Generally, a $q$-ary simplex code of dimension $k$ is a non-degenerate linear code of length $\frac{q^k-1}{q-1}$ and the Hamming distance between any two distinct code words is $q^{k-1}$; all such codes are equivalent by the MacWilliams theorem. 
Also, all code words of these codes form an algebraic surface
which is quadric only in two cases: $q=2$, $k=3$ and $q=3$,
$k=2$ \cite{KPP}.
Every non-degenerate equidistant code is 
equivalent to some replication of a simplex code \cite{Bonis}. A degenerate equidistant code can be obtained from a non-degenerate equidistant code by adding some zero coordinates to each code word.
So, equivalence classes of equidistant codes are completely determined by the length, dimension and degeneracy degree (under the assumption that the ground field is fixed). 

A geometry defined by an equivalence class of binary equidistant codes
is formed by all points of a projective space 
(over the two-element field)   
with a fixed even Hamming weight. 
Two such points are collinear if the Hamming distance between them is equal to their Hamming weight.
So, points of the geometry can be naturally identified with $2m$-element subsets of an $n$-element set and two distinct points are collinear if  the intersection of the corresponding subsets contains precisely $m$ elements; in this case, the third point on the line is determined by the symmetric difference of these subsets. 
Then $n\ge 3m$ since otherwise, there are no lines. 
Here are some known geometries from the infinite family described above:
\begin{itemize}
\item 
a line of size $3$ ($n=3m=3$),

\item 
the Pasch configuration ($n=3m+1=4$),

\item 
the Cremona-Richmond configuration known also as the generalized quadrangle of type $2,2$ ($n=3m=6$),

\item
if $n=4m-1=2^{k}-1$, then the maximal singular subspaces
of the geometry correspond to binary simplex codes of dimension $k$; for $m=2$ we obtain a polar space.
\end{itemize}
The main result (Theorem \ref{theorem-aut}) states that  automorphisms of the geometry are induced by coordinate permutations (monomial linear automorphisms) of the corresponding vector space except the cases when $n=4m-1$ or $n=4m$ for which additional automorphisms induced by non-monomial linear automorphisms arise. 
Note that the first exceptional case includes the geometry of simplex codes.

If $n\ge 3m+1$ and $m\ne2$, then
the collinearity graph of our geometry
contains maximal cliques which are not maximal singular 
subspaces. It follows from \cite{Ryser} that 
the cardinality of such a clique is not greater than $n$
and in case it is $n$ its elements are blocks of a certain symmetric block design.

The geometries in question can be also considered as 
partial Steiner triple systems ($\psts$) embedded in 
Steiner triple systems ($\sts$) which are projective spaces over the two-element field. 
There are many papers devoted to embeddings of $\psts$'s in $\sts$'s, e.g.\ \cite{embPSTS}.
Our result is related to \cite{Cameron}, where embeddings of $\psts$'s in $\sts$'s satisfy the property that every $\psts$ automorphism can be extended to an $\sts$ automorphism (Remark \ref{rem-psts}). 

The geometry of $4$-ary simplex codes of dimension $2$ is investigated in \cite{KP-dim2}.
The non-binary case is completely different and does not admit a set-theoretic interpretation. A point is not determined by its set of non-zero coordinates; furthermore, two points of the same Hamming weight need not to be collinear in the corresponding geometry if the Hamming distance between them is equal to their Hamming weight (Section 5).

\section{Main objects, examples and basic properties}

A {\it point-line geometry} is a pair $({\mathcal P},{\mathcal L})$,
where ${\mathcal P}$ is a set whose elements are called {\it points} and 
${\mathcal L}$ is a family of subsets of ${\mathcal P}$ called {\it lines}. 
Every line contains at least two points and the intersection of two distinct lines contains at most one point.
Two distinct points are said to be {\it collinear} if there is a line containing them.
The {\it collinearity graph} of $({\mathcal P},{\mathcal L})$
is the simple graph whose vertex set is ${\mathcal P}$ and two distinct vertices are connected by an edge if they are collinear points. 
A subset $X\subset {\mathcal P}$ is called a {\it subspace} if for any two distinct collinear points from $X$
the line containing these points is a subset of $X$. 
A subspace is {\it singular} if any two distinct points of this subspace are collinear. 

Let ${\mathbb F}$ be the two-element field. 
Consider the $n$-dimensional vector space $V={\mathbb F}^n$ over this field with 
the standard basis 
$$e_{1}=(1,0,\dots,0),\dots,e_{n}=(0,\dots,0,1).$$
Every non-zero vector of $V$ is of type $e_I=\sum_{i\in I}e_i$,
where $I$ is a non-empty subset of $[n]=\{1,\dots,n\}$;
the $i$-th coordinate of $e_I$ is $1$ if $i\in I$ and $0$ otherwise.
Let ${\mathcal P}(V)$ be the associated projective space and let  $P_I$
be the point of ${\mathcal P}(V)$ corresponding to $e_I$.
Recall that $|I|$ is called the {\it Hamming weight} of $P_I$.
The line of ${\mathcal P}(V)$ containing two distinct points $P,Q\in {\mathcal P}(V)$
is denoted by $\langle P, Q\rangle$ and  we write $P\odot Q$ for the point of this line distinct from $P,Q$.
For any non-empty subsets $I,J\subset [n]$ we have 
$$e_I+e_J=e_{I\mathbin{\triangle} J}$$
and, consequently, 
$$P_I\odot P_J= P_{I\mathbin{\triangle} J}.$$
Let $m$ be a positive integer satisfying $3m\le n$. 
Denote by ${\mathcal P}_m$ the set of all points $P_I\in {\mathcal P}(V)$ such that $|I|=2m$.
For distinct $P_I,P_J\in {\mathcal P}_m$ the point $P_I\odot P_J$ belongs to ${\mathcal P}_m$ if and only 
if $|I\cap J|=m$. 
We will consider ${\mathcal P}_m$ as a point-line geometry whose lines are the lines of ${\mathcal P}(V)$
contained in ${\mathcal P}_m$.

\begin{rem}
If $3m>n$, then for any subsets $I,J\subset [n]$ satisfying 
$|I|=|J|=2m$ we have $|I\cap J|>m$ which implies that $|I\mathbin{\triangle} J|< 2m$, i.e.\
the set of all $P_I$, $|I|=2m$ contains no line of ${\mathcal P}(V)$.
The same holds for every set formed by all points of fixed odd Hamming weight 
(if $I,J\subset [n]$ and $|I|=|J|$ is odd, then $|I\mathbin{\triangle} J|$  is even). 
\end{rem}

Note that if $n=3$, then ${\mathcal P}_1$ is a line of size $3$.
If $n=4$, then ${\mathcal P}_1$ is the Pasch (Veblen) configuration, see Fig.~\ref{fig:veblen}.
\begin{figure}[ht]
\begin{center}
\includegraphics[scale=0.65]{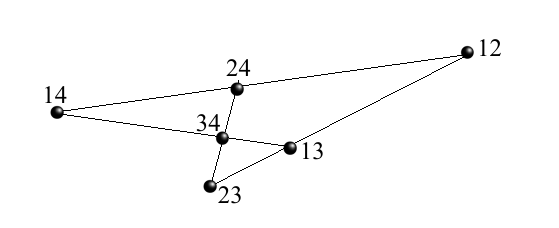}
\caption{The Pasch configuration: points and lines of ${\mathcal P}_1$ for $n=4$.}
\label{fig:veblen}
\end{center}
\end{figure}  

If $n=6$, then ${\mathcal P}_2$ is the Cremona-Richmond configuration, see Fig.~\ref{fig:crconf}. Every $P_I\in {\mathcal P}_2$ is identified with the $2$-element subset $[6]\setminus I$ and three points of ${\mathcal P}_2$ form a line if and only if the corresponding $2$-element subsets are mutually disjoint.

\begin{figure}[ht]
\begin{center}
\includegraphics[scale=0.45]{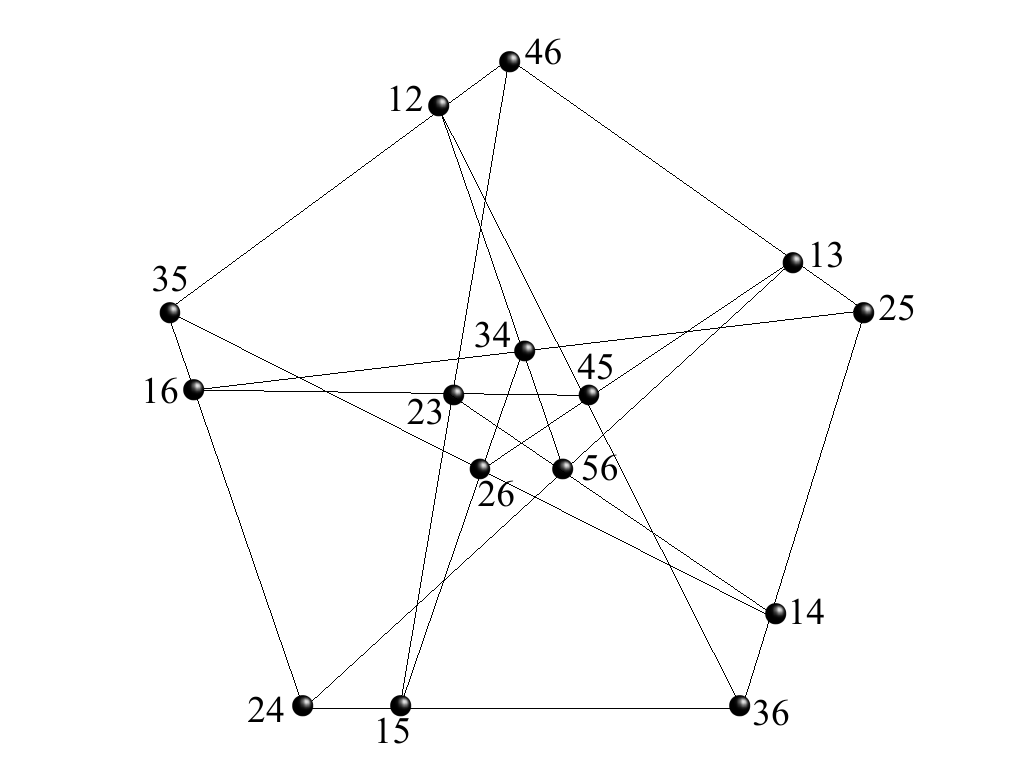}
\caption{The Cremona-Richmond configuration: points and lines of ${\mathcal P}_2$ for $n=6$.}
\label{fig:crconf}
\end{center}
\end{figure}  

Let $\Gamma_m$ be the collinearity graph of the geometry ${\mathcal P}_m$.
Observe that $\Gamma_1$ is the Johnson graph $J(n,2)$.
The following statement shows that $\Gamma_m$ is a connected graph of diameter $2$.

\begin{prop}\label{prop-coll-graph}
For any two points of\/ ${\mathcal P}_m$
there is a point of\/ ${\mathcal P}_m$ collinear to each of them.
\end{prop}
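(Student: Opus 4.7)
The plan is to construct, for any two given points $P_I, P_J \in {\mathcal P}_m$, an explicit $2m$-element set $K \subset [n]$ with $|I \cap K| = |J \cap K| = m$; the corresponding $P_K$ is then a point of ${\mathcal P}_m$ collinear with each of $P_I, P_J$. The idea is to control $K$ through its intersections with the four blocks $A := I \cap J$, $B := I \setminus J$, $C := J \setminus I$, $D := [n] \setminus (I \cup J)$, which have sizes $a$, $2m - a$, $2m - a$, $n - 4m + a$, where $a := |I \cap J|$. Writing $x, y, z, w$ for $|K \cap A|, |K \cap B|, |K \cap C|, |K \cap D|$, the three cardinality requirements $|K| = 2m$, $|K \cap I| = m$, $|K \cap J| = m$ collapse to $y = z = m - x$ and $w = x$, so the problem reduces to choosing a value of $x$ for which each block can supply the prescribed number of elements.

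For $a \le m$ I would take $x = 0$, selecting any $m$-subset of $B$ and any $m$-subset of $C$; both exist since $|B| = |C| = 2m - a \ge m$. For $a > m$ I would take $x = a - m$, drawing $a - m$ elements from $A$, $2m - a$ from each of $B, C$, and $a - m$ from $D$. Here the only non-trivial block inequality is $|D| = n - 4m + a \ge a - m$, which rearranges exactly to the standing hypothesis $n \ge 3m$. The degenerate situation $P_I = P_J$ (where $a = 2m$) is absorbed into this second branch without separate treatment, and in both branches the identity $|I \cap K| = m < 2m$ automatically forces $P_K \ne P_I, P_J$.

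The argument is essentially organisational: once the four-block decomposition is written down and the linear system for $(x,y,z,w)$ is solved, what remains is a routine verification that the chosen $x$ respects each block size. The one substantive step is that the standing hypothesis $n \ge 3m$ surfaces precisely as the inequality needed for $D$ to furnish the required $a - m$ elements in the difficult range $a > m$; this is the only place where the ambient assumption is used, and no further input beyond elementary counting is required.
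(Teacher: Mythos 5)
Your proposal is correct and takes essentially the same route as the paper: your branch $x=0$ for $a\le m$ reproduces the paper's choice of $m$-element subsets $M\subset I\setminus J$ and $N\subset J\setminus I$, and your branch $x=a-m$ for $a>m$ is exactly the paper's set $A_1\cup A_2\cup A_3\cup A_4$ with $t=a-m$, with the hypothesis $n\ge 3m$ invoked at the same point to supply the $t$ elements outside $I\cup J$. The only cosmetic difference is that you absorb the collinear case $a=m$ into the first branch (your $K$ is then $I\mathbin{\triangle} J$, i.e.\ $P_K=P_I\odot P_J$), which the paper dismisses separately as obvious.
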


\begin{proof}
The statement is obvious for collinear points of ${\mathcal P}_m$.
Let $P_I,P_J$ be non-collinear points of ${\mathcal P}_m$. Then $|I\cap J|\ne m$.

If $|I\cap J|<m$, then 
$$|I\setminus J|=|J\setminus I|>m.$$
In this case, we take any $m$-element subsets 
$$M\subset I\setminus J\quad\text{and}\quad N\subset J\setminus I.$$
The point $P_{M\cup N}$ is as required.

Now, suppose that $|I\cap J|>m$. 
Then 
$$|I\cap J|=m+t\quad\text{and}\quad|I\setminus J|=|J\setminus I|=m-t$$ 
for a certain integer $t>0$. This implies that
 $$|I\cup J|=2m+m-t=3m-t\quad\text{and}\quad\bigl|[n]\setminus (I\cup J)\bigr|\ge 3m-(3m-t)=t.$$
We take $t$-element subsets 
$$A_1\subset I\cap J\quad\text{and}\quad A_2\subset [n]\setminus(I\cup J).$$
Then
$$A=A_1\cup A_2\cup (I\setminus J)\cup (J\setminus I)$$
is a $2m$-element subset intersecting each of $I,J$ in an $m$-element subset.
The point $P_A$ is as required.
\end{proof}

Recall that a {\it clique} is a subset in the vertex set of a graph, where any two distinct vertices are connected by an edge. A clique is {\it maximal} if every clique containing it coincides with this clique.

\begin{prop}\label{prop2}
The following assertions are fulfilled:
\begin{enumerate}
\item[(A)] If $n=3m$, then maximal cliques of\/ $\Gamma_m$ are precisely lines of\/ ${\mathcal P}_m$.
\item[(B)] Assume that $n\ge 3m+1$. 
If $n=7$ and $m=2$, then ${\mathcal P}_m$ is a polar space.
In the remaining cases, for any two distinct collinear points $P,P'\in {\mathcal P}_m$
there is a point $Q\in {\mathcal P}_m$ collinear to both $P,P'$ and non-collinear to $P\odot P'$.
\end{enumerate}
\end{prop}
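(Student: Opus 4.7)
The plan is to reduce both parts to a single combinatorial setup. Given distinct collinear $P_I,P_J\in{\mathcal P}_m$, decompose
$$[n]=(I\cap J)\sqcup(I\setminus J)\sqcup(J\setminus I)\sqcup([n]\setminus(I\cup J))$$
into blocks of sizes $m,m,m,n-3m$, and describe any candidate common neighbor $P_K\in{\mathcal P}_m$ of $P_I,P_J$ by the four intersection sizes $a,b,c,d$ of $K$ with these blocks. The three linear constraints $|K|=2m$, $|K\cap I|=m$, $|K\cap J|=m$ collapse the parameter space to a single integer $b\in[\max(0,4m-n),m]$ via $a=d=m-b$, $c=b$; moreover $|K\cap(I\triangle J)|=2b$, and the degenerate case $K=I\triangle J$ corresponds to $b=m$.

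For (A), the hypothesis $n=3m$ forces $d=0$ and hence $b=m$, so the only common neighbor of $P_I,P_J$ is $P_{I\triangle J}$ itself. Every triangle of $\Gamma_m$ is therefore a line and no four-clique exists; combined with Proposition~\ref{prop-coll-graph} (which guarantees every edge is in a triangle), this identifies the maximal cliques with the lines.

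For (B) with $n\ge 3m+1$, the task is to find $b$ in the admissible range satisfying $b\ne m$ (so $P_K\ne P_{I\triangle J}$) and $2b\ne m$ (so $P_K$ is not collinear to $P_{I\triangle J}$). A short parity split suffices: for $m$ odd, $b=m-1$ works unconditionally; for $m$ even, $b=m-1$ still works provided $m-1\ne m/2$, i.e.\ $m\ne 2$; when $m=2$ we instead take $b=0$, which is admissible iff $n\ge 8$. The only obstructed case is $n=7$, $m=2$, where the range $\{1,2\}$ leaves both values blocked ($b=1$ gives $2b=m$, $b=2$ gives $K=I\triangle J$).

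For this exceptional case the same parameterization shows that the only common neighbor of $P_I,P_J$ distinct from $P_{I\triangle J}$ arises from $b=1$ and automatically satisfies $|K\cap(I\triangle J)|=m$, i.e.\ is collinear to $P_{I\triangle J}$. This is precisely the Buekenhout--Shult ``one or all'' axiom for the line $\{P_I,P_J,P_{I\triangle J}\}$; together with the diameter-$2$ connectedness from Proposition~\ref{prop-coll-graph} and a routine non-degeneracy check (one can vary $|K\cap I|$ over $\{0,1,2,3,4\}$), it identifies ${\mathcal P}_2$ as a non-degenerate polar space. The main obstacle throughout is precisely the small-case bookkeeping that isolates $(n,m)=(7,2)$; everything else is mechanical counting inside the four-block partition.
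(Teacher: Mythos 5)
Your four-block parameterization is the paper's own argument in slightly different notation: the paper works with $x_1=|J\cap(I\setminus I')|$, $x_2=|J\cap I\cap I'|$, $x_3=|J\cap(I'\setminus I)|$ and derives $x_1=x_3$ from the two collinearity conditions, which is exactly your reduction to the single parameter $b$ (your $b=m-1$ is the paper's witness $x_1=x_3=m-1$, $x_2=1$, and your $b=0$ for $m=2$, $n\ge 8$ is the paper's $x_1=x_3=0$, $x_2=2$). Part (A) and the generic part of (B), including the isolation of $(n,m)=(7,2)$, are correct as you present them.

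There is, however, a genuine gap in your treatment of the exceptional case $(n,m)=(7,2)$. What you prove is that a point collinear with \emph{two} points of a line is collinear with the third --- i.e.\ that no point sees exactly two points of a line. That is only half of the Buekenhout--Shult axiom: one must also show that a point off a line is never collinear with \emph{zero} of its points, and your proposal does not do this. The appeal to Proposition \ref{prop-coll-graph} cannot close it: diameter $2$ gives a common neighbour of two prescribed points, not a neighbour of a prescribed point lying on a prescribed line. Likewise, varying $|K\cap I|$ over $\{0,1,2,3,4\}$ addresses non-degeneracy (no point collinear with all others), which is a different issue. What you have established is the gamma-space condition (none, one, or all), strictly weaker than the polar-space axiom. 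The missing half does hold, but it requires precisely the enumeration the paper carries out: for $n=7$, $m=2$ the complement of $I\cup I'$ is a single element, so $x_1+x_2+x_3\in\{3,4\}$ with each $x_i\le 2$, and a case check shows every such triple either makes one of the pairwise sums $x_1+x_2$, $x_2+x_3$, $x_1+x_3$ equal to $2$ (so $P_J$ sees one or all points of the line) or forces $P_J$ to coincide with a point of the line; the ``zero'' case never occurs. Adding this short enumeration would complete your proof; as written, the claim that ${\mathcal P}_2$ is a polar space is not yet justified.
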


\begin{proof}
Let  $P_I,P_{I'}$ be distinct collinear points of ${\mathcal P}_m$.
Then each of the sets 
$$I_1=I\setminus I',\qquad I_2=I\cap I',\qquad I_3=I'\setminus I$$
contains precisely $m$ elements. 
Take $P_J\in {\mathcal P}_m$ and
$x_i=|J\cap I_i|$, $i\in \{1,2,3\}$. If 
$P_J$ is collinear to both $P_I,P_{I'}$, then
\begin{equation}\label{eq-col}
x_1+x_2=x_2+x_3=m
\end{equation}
which means that $x_1=x_3$.

If $n=3m$, then $J\subset I\cup I'$ and thus $$x_1+x_2+x_3=2m.$$ In this case, \eqref{eq-col}  shows that $x_1=x_3=m$ and $x_2=0$, i.e.\
$J=I\mathbin{\triangle} I'$ and $P_J=P_I\odot P_{I'}$. This implies (A).

Consider the case when $n\ge 3m+1$. 

Suppose that $n=7$ and $m=2$.  
Since $J$ is contained in $I\cup I'$ or intersects it in a $3$-element subset, 
$x_1+x_2+x_3$ is equal to $4$ or $3$ respectively. Note that $x_1, x_2, x_3\le 2$, so one of 
the following possibilities is realized:
\begin{itemize}
\item
$x_1=x_2=x_3=1$, then $P_J$ is collinear to $P_I,P_{I'},P_{I\mathbin{\triangle} I'}$;

\item
precisely one of $x_1,x_2,x_3$ is $2$ and the remaining two both are $1$ or one of them is $0$, then
$P_J$ is collinear to precisely one of $P_I,P_{I'},P_{I\mathbin{\triangle} I'}$;

\item
two of $x_1,x_2,x_3$ are $2$ and the remaining is equal to $0$, then 
$P_J$ coincides with one of $P_I,P_{I'},P_{I\mathbin{\triangle} I'}$.
\end{itemize}
This means that ${\mathcal P}_m$ satisfies one-or-all axiom and thus it is a polar space.

For $m\ne 2$ we take $J$ satisfying 
$$x_1=x_3=m-1\quad\text{and}\quad x_2=1$$
(this is possible, since $[n]\setminus (I\cup I')$ is non-empty).
If $m=2$ and $n\ge 8$, then 
$$\bigl|[n]\setminus (I\cup I')\bigr|\ge 2$$
and we can choose $J$ intersecting $I\cup I'$ precisely in $I\cap I'$, 
i.e.\ $x_1=x_3=0$ and $x_2=2$.
In each of these cases, $P_J$ is collinear to both $P_I,P_{I'}$ and non-collinear to $P_{I\mathbin{\triangle} I'}$.
\end{proof}

Proposition~\ref{prop2} shows that the family of maximal cliques of $\Gamma_m$
coincides with the family of maximal singular subspaces of ${\mathcal P}_m$ if
$n=3m$ or $n=7$ and $m=2$. 
In the remaining  cases, $\Gamma_m$ contains maximal cliques which are not 
maximal singular subspaces of ${\mathcal P}_m$.
By \cite{Ryser}, every maximal clique of $\Gamma_m$ contains no more than $n$ elements;
furthermore, if it contains $n$ elements than these elements are blocks of a certain symmetric block design.

By the first part of Proposition~\ref{prop2}, every maximal singular subspace of ${\mathcal P}_m$
is a line if $n=3m$.
Now, we determine maximal singular subspaces of ${\mathcal P}_m$ in the general case.
Since the geometry ${\mathcal P}_m$ is embedded in the projective space ${\mathcal P}(V)$,
every singular subspace of  ${\mathcal P}_m$ corresponds to a certain subspace of $V$.

Every $k$-dimensional subspace $C\subset V$ is 
a {\it binary} $[n,k]$ {\it code}
and vectors of $C$ are called {\it code words} of this code.
The code is {\it non-degenerate} if the restriction of every coordinate functional 
$(x_1,\dots,x_n)\to x_i$ to $C$ is non-zero. 
A {\it generator matrix} of $C$ is 
the $(k\times n)$-matrix whose rows are vectors from a certain basis of $C$.
A code is non-degenerate if and only if its generator matrices do not contain zero columns. 
Two codes in $V$ are {\it equivalent} if there is a coordinate permutation of $V$
transferring one of them to the other.

The {\it Hamming distance} between $e_I,e_J\in V$ is equal to $|I\mathbin{\triangle} J|$
and the Hamming weight of any vector from $V$ is the Hamming distance between this vector and zero.
A code $C\subset V$ is said to be $t$-{\it equidistant} if the Hamming distance between any 
two distinct code words in this code  is equal to $t$, equivalently, all non-zero code words of $C$
are of Hamming weight $t$.
There is a natural one-to-one correspondence between singular subspaces of ${\mathcal P}_m$
and $2m$-equidistant codes of $V$; furthermore,  maximal singular subspaces correspond to  
maximal $2m$-equidistant codes.

Suppose that $n=2^{k}-1$, i.e. $n$ coincides with the number of non-zero vectors in ${\mathbb F}^k$.
In this case, a binary $[n,k]$ code $C\subset V$ is called a {\it binary simplex code of dimension $k$} if 
columns in every generator matrix of $C$ are non-zero and mutually distinct,
in other words, there is a one-to-one correspondence between columns of this matrix
and non-zero vectors of  ${\mathbb F}^k$
(if a certain generator matrix of $C$ satisfies this condition, then it holds for all generator matrices of $C$).
All such codes are equivalent and can be characterized as maximal $(2^{k-1})$-equidistant codes of $V$.

By \cite{Bonis} (see also \cite[Theorem 7.9.5]{HP-book} or \cite{Ward}), a (not necessarily non-degenerate) 
binary $[n,k]$ code $C\subset V$ is $t$-equidistant  if and only if there are natural numbers $s\ge 1$ and $r$ such that
$$n=(2^k-1)s+r,\qquad t=2^{k-1}s$$
and every generator matrix $M$ of $C$ satisfies the following:
\begin{itemize}
\item
$M$ contains precisely $r$ zero-columns;
\item
$M$ contains every non-zero vector of ${\mathbb F}^k$ as a column 
precisely $s$ times; 
\end{itemize}
roughly speaking, $C$ is equivalent to the $s$-fold replication of a binary $k$-dimensional simplex code
with added $r$ zero-coordinates.
This implies that all $t$-equidistant codes of the same dimension in $V$ are equivalent.
The dimension of maximal $t$-equidistant codes in $V$ is the greatest natural $k$ satisfying the following condition:
there is natural $s>0$ such that
$$t=2^{k-1}s\quad\text{and}\quad n\ge (2^k-1)s.$$
Therefore, the (projective) dimension of maximal singular subspaces of ${\mathcal P}_m$
is the greatest natural $k-1$ such that 
$$m=2^{k-2}s\quad\text{and}\quad n\ge (2^k-1)s$$
for a certain natural $s>0$.

\section{Automorphisms}
An {\it automorphism} of a point-line geometry is a bijective transformation of the set of points 
preserving the family of lines in both directions.
For example, every automorphism of a projective space 
is induced by a semilinear automorphism of the corresponding vector space 
(the Fundamental Theorem of Projective Geometry).

Every  linear automorphism  of $V$ preserving ${\mathcal P}_m$ induces an automorphism  of 
the geometry ${\mathcal P}_m$. 
This holds, for example, for every coordinate permutation (monomial linear automorphism).
The examples below show that
there are other linear automorphisms which preserve ${\mathcal P}_m$.

\begin{exmp}\label{exmp-lin1}
Suppose that $n=4m-1$. For every $i\in [n]$ we denote by $l_i$ the linear automorphism of $V$
leaving fixed each $e_j$, $j\ne i$ and transferring $e_i$ to $e_{[n]}$.
Then $l_i(e_I)=e_I$ if $i\not\in I$.
For $I\subset [n]$ containing $i$ we obtain that
$$l_i(e_I)=e_{[n]}+e_{I\setminus\{i\}}=e_{[n]\setminus (I\setminus \{i\})};$$
if $|I|=2m$, then $$\bigl|[n]\setminus (I\setminus \{i\})\bigr|=4m-1-(2m-1)=2m.$$
Therefore, $l_i$ preserves ${\mathcal P}_m$. 
\end{exmp}

\begin{rem}
Suppose that $n=4m-1=3$. If $i\in\{1,2,3\}$ and $j,s$
are the remaining two elements of $\{1,2,3\}$, then 
the automorphism of ${\mathcal P}_1$ induced by $l_i$ 
 is also induced by the transposition of $j$-th and $s$-th coordinates. 
\end{rem}

\begin{exmp}\label{exmp-lin3}
Assume that $n=4m$.
For distinct $i,j\in [n]$ denote by $s_{ij}$ and $s'_{ij}$ the linear automorphisms of $V$ which leave each
$e_t$, $t\ne i,j$ fixed and satisfy the following conditions: 
\begin{align*}
s_{ij}(e_i) &= e_{[n]\setminus \{i\}}, & s_{ij}(e_j) &= e_{[n]\setminus \{j\}},\\
s'_{ij}(e_i) &= e_{[n]\setminus \{j\}}, & s'_{ij}(e_j) &= e_{[n]\setminus \{i\}}.
\end{align*}
Each of them  leaves $e_I$ fixed if $i,j\not\in I$.
If $i,j\in I$, then
$$s_{ij}(e_I)=s'_{ij}(e_I)=e_{[n]\setminus\{i\}}+e_{[n]\setminus \{j\}}+e_{I\setminus\{i,j\}}=
e_{\{i,j\}}+e_{I\setminus\{i,j\}}=e_I.$$
Suppose that $I$ is a $2m$-element subset of $[n]$ containing only one of $i,j$, say $i$.
Then
$$s_{ij}(e_I)=e_{[n]\setminus \{i\}}+e_{I\setminus\{i\}}=e_{[n]\setminus I}$$
and
$$s'_{ij}(e_I)=e_{[n]\setminus \{j\}}+e_{I\setminus\{i\}}=e_{([n]\setminus \{j\})\setminus(I\setminus\{i\})}$$
It is clear that $\bigl|[n]\setminus I\bigr|=2m$. 
Since $j\not\in I$, $I\setminus\{i\}$ is a $(2m-1)$-subset of the $(4m-1)$-element set  $[n]\setminus \{j\}$
which implies that
$$\bigl|([n]\setminus \{j\})\setminus(I\setminus\{i\})\bigr|=2m.$$
So, $s_{ij}$ and $s'_{ij}$ both preserve ${\mathcal P}_m$.
\end{exmp}

\begin{rem}
Suppose that $n=4m=4$. If $i,j\in \{1,2,3,4\}$  and $s,t$ are the remaining two elements of 
$ \{1,2,3,4\}$, then the automorphism of ${\mathcal P}_1$ induced by $s_{ij}$ is also induced 
by the transposition of $i$-th, $j$-th coordinates and $s$-th, $t$-th coordinates. 
Similarly, $s'_{ij}$ induces the automorphism of ${\mathcal P}_1$ which is also induced by 
the transposition of $s$-th, $t$-th coordinates.
\end{rem}

Our main result is the following.

\begin{theorem}\label{theorem-aut}
Every automorphism of the geometry  ${\mathcal P}_m$ is induced by 
a coordinate permutation of $V$ or is the composition of the automorphism induced by 
a coordinate permutation and the automorphism induced by one of the linear automorphisms 
considered in Examples  \ref{exmp-lin1}, \ref{exmp-lin3}.
\end{theorem}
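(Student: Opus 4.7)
My plan is to recover a permutation of $[n]$ from any automorphism $f$ of $\mathcal{P}_m$, so as to express $f$ as a coordinate permutation, possibly after composing with one of the exceptional linear automorphisms from Examples \ref{exmp-lin1} and \ref{exmp-lin3}. The setup is standard: $f$ preserves the collinearity graph $\Gamma_m$, its family of maximal cliques, and (by Proposition \ref{prop2}) the family of maximal singular subspaces. Each line $\{P_I,P_J,P_K\}$ corresponds to an unordered partition of the $3m$-set $I\cup J\cup K$ into three disjoint $m$-sets $A=J\cap K$, $B=I\cap K$, $C=I\cap J$, and $f$ induces a bijection on such ``partial triangles.''

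For $n=3m$, Proposition \ref{prop2}(A) identifies maximal cliques with lines, so $\mathcal{P}_m$ is essentially the geometry of $3$-partitions of $[3m]$ into $m$-sets. I would recover every $m$-subset of $[3m]$ as a pairwise intersection class of collinear points, and then reconstruct the coordinates by standard arguments (each $i\in[3m]$ is determined by the family of $m$-sets containing it). This forces $f$ to be induced by a permutation of $[3m]$, with no exceptional automorphism needed.

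For $n\geq 3m+1$ the key invariants are the coordinate stars $S_i:=\{P_I\in\mathcal{P}_m:i\in I\}$, $i\in [n]$. A short calculation shows that every line of $\mathcal{P}_m$ meets $S_i$ in $0$ or exactly $2$ points; moreover $|S_i|=\binom{n-1}{2m-1}$, and the pair $\{S_i,\mathcal{P}_m\setminus S_i\}$ is characterized by this line-transversality property together with suitable maximality. I would show that in the generic range $n\notin\{4m-1,4m\}$ the $S_i$ are the only sets with this combinatorial signature, so $f$ must permute the family $\{S_i\}_{i\in[n]}$ and thereby induce a permutation $\sigma\in S_n$. In the exceptional cases $n=4m$ and $n=4m-1$ additional ``twisted anti-stars'' arise: for $n=4m$ the map $I\mapsto [n]\setminus I$ preserves weight $2m$, and for $n=4m-1$ the map $l_i$ of Example \ref{exmp-lin1} produces an analogous partial complementation. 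These extras are precisely the images of coordinate stars under the maps $s_{ij},s'_{ij}$ (for $n=4m$) and $l_i$ (for $n=4m-1$). Composing $f$ with a suitable such exceptional automorphism reduces us to the generic situation; composing further with the coordinate permutation $\sigma^{-1}$ yields an automorphism fixing every $S_i$ setwise, and since $I=\{i\in[n]:P_I\in S_i\}$, this forces the result to be the identity on $\mathcal{P}_m$.

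The main obstacle lies in the intrinsic characterization of coordinate stars in the previous paragraph and the precise determination of how it fails in the exceptional dimensions: I must prove that every set with the $0$-or-$2$-transversality property of the right cardinality is either a star or one of the ``complementary'' sets obtained from the Examples, and nothing else. This analysis will also require explicit handling of the polar space $\mathcal{P}_2$ for $n=7$, where the one-or-all axiom produces a richer symmetry that must be matched to the list, as well as direct verification in the small cases $\mathcal{P}_1$ with $n\in\{3,4\}$ and the Cremona--Richmond configuration $\mathcal{P}_2$ with $n=6$, where several of the invariants degenerate.
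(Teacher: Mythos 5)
Your preliminary observations are correct (every line of $\mathcal{P}_m$ indeed meets the star $S_i=\{P_I: i\in I\}$ in $0$ or $2$ points, $|S_i|=\binom{n-1}{2m-1}$, and the final reconstruction $I=\{i: P_I\in S_i\}$ works), but the step you yourself flag as the main obstacle --- the intrinsic characterization of the stars --- is not only unproven, it is false in the form you propose. A subset meeting every line in $0$ or $2$ points is exactly the complement of a geometric hyperplane of $\mathcal{P}_m$, and every \emph{parity set} $\mathcal{S}_A=\{P_I\in\mathcal{P}_m: |I\cap A|\ \text{odd}\}$, $A\subset[n]$, has this property, since $|(I\mathbin{\triangle}J)\cap A|\equiv |I\cap A|+|J\cap A| \pmod 2$. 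Writing $|\mathcal{S}_A|=\frac{1}{2}\bigl[\binom{n}{2m}-c_A\bigr]$ with $c_A=[x^{2m}](1-x)^{|A|}(1+x)^{n-|A|}$, the identity $x^np(1/x)=(-1)^{|A|}p(x)$ for $n=4m$ gives $c_A=0$ for \emph{every} odd $|A|$, so all such $\mathcal{S}_A$ share the cardinality $\frac{1}{2}\binom{4m}{2m}=\binom{n-1}{2m-1}$ of a star; for $n=4m\ge 12$ the sets with $|A|=5,7,\dots$ are distinct from stars ($|A|=1$) and from the twists produced by $s_{ij},s'_{ij}$ ($|A|=3$), so your signature (transversality plus cardinality, with any amount of ``suitable maximality'' left unspecified) cannot pin down the family you need. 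Worse for your reduction scheme: these larger-support parity sets genuinely occur as images of stars under automorphisms --- a direct computation of pulled-back coordinate functionals shows that for disjoint $\{i,j\},\{k,l\}$ the composition $s_{kl}\circ s_{ij}$ sends the star $S_t$ ($t\notin\{i,j,k,l\}$) to $\mathcal{S}_{\{t,i,j,k,l\}}$ --- so composing $f$ with a \emph{single} map from Example \ref{exmp-lin3} cannot in general restore all stars at once, and your one-step ``reduce to the generic situation'' does not terminate as described. Finally, proving that parity sets are the \emph{only} $0$-or-$2$ sets amounts to classifying the geometric hyperplanes of $\mathcal{P}_m$ (equivalently, showing they all arise from the embedding in $\mathcal{P}(S)$), which your proposal does not attempt and which is at least as hard as the theorem itself. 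You also silently place $n=4m+1$ in the generic range, whereas the paper's counting invariant degenerates there too (Lemma \ref{lemma-lambda} gives $\lambda_1=\lambda_{2m}$) and the case is killed only by an explicit computation showing the candidate linear map fails to preserve $\mathcal{P}_m$; your argument has no step that would detect or exclude this.

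The paper's route is structurally different and sidesteps the hyperplane-classification wall: it extends $f$ from $\mathcal{P}_m$ to the low-weight points $\overline{\mathcal{P}}_m$ by setting $f(P\odot Q)=f(P)\odot f(Q)$, with well-definedness secured by Pasch-configuration connectivity (Proposition \ref{prop-conn}, Lemma \ref{lemma-def}); the line counts $\lambda_i$ through a weight-$2i$ point (Lemma \ref{lemma-lambda}) then force $f(\mathcal{P}_1)=\mathcal{P}_1$ outside $n=4m+\epsilon$, $\epsilon\in\{-1,0,1\}$, after which Johnson-graph automorphisms finish (Lemma \ref{lemma-p1}); in the three boundary cases $\overline{\mathcal{P}}_m=\mathcal{P}(S)$, a triangle argument shows $f$ preserves all lines of $\mathcal{P}(S)$, the Fundamental Theorem of Projective Geometry makes $f$ linear (Lemma \ref{lemma-epsilon-case}), and explicit linear algebra together with the clique analysis on $\mathcal{P}_1\cup\mathcal{P}_{2m-1}$ identifies $l_i$, $s_{ij}$, $s'_{ij}$ and eliminates $n=4m+1$. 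Your $n=3m$ case, by contrast, is salvageable and even pleasantly short if made precise: complementation identifies $\Gamma_m$ with the Kneser graph $K(3m,m)$, whose automorphism group is known to be $S_{3m}$, and Proposition \ref{prop2}(A) identifies lines with maximal cliques; but you would need to invoke that theorem explicitly rather than appeal to ``standard arguments.'' As it stands, the central characterization step is missing and, with the invariants you propose, provably insufficient, so the proof does not go through.
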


\begin{rem}\label{rem-case1}
Recall that $\Gamma_1$ is the Johnson graph $J(n,2)$.
If $n\ne 4$, then every automorphism of this graph is induced by a permutation on $[n]$
which can be identified with a coordinate permutation of $V$. 
For $n=4$ the same holds only in the case when an automorphism of $\Gamma_1$ preserves  the types of maximal cliques, i.e. stars go to stars and tops go to tops (see Appendix). 
Since tops of $\Gamma_1$ correspond to lines of ${\mathcal P}_1$,
automorphisms of the geometry ${\mathcal P}_1$ are precisely such automorphisms of $\Gamma_1$.
\end{rem}

\begin{rem}\label{rem-polar}
By Proposition \ref{prop2}, ${\mathcal P}_m$ is a polar space
if $n=7$ and $m=2$. 
It is well-known that every automorphism of the collinearity graph of a polar space 
is an automorphism of this polar space.
\end{rem}

\begin{exmp}
Suppose that $n=4m$.
Since
$$([n]\setminus I)\mathbin{\triangle}([n]\setminus J)=I\mathbin{\triangle} J$$
for any $I,J\subset [n]$,
the map transferring every $P_I\in {\mathcal P}_m$ to $P_{[n]\setminus I}$
is an automorphism of $\Gamma_m$ which is not an automorphism of ${\mathcal P}_m$.
\end{exmp}

Is it possible to determine automorphisms of the graph $\Gamma_m$? 
In particular, is it true that every automorphism of $\Gamma_m$ is an automorphism of ${\mathcal P}_m$ if $n\ne 4m$?

For every subset ${\mathcal X}$ in the point set of a geometry we denote by ${\mathcal X}^c$
the set of points collinear to all points of ${\mathcal X}$.
If  $p,p'$ are distinct collinear points of a polar space, then $\{p,p'\}^{cc}$ is the line containing these points.
In the case when $n>3m$ and $m\ne 7$ or $m\ne2$, 
Proposition \ref{prop2} shows that for any distinct collinear $P,P'\in {\mathcal P}_m$
the set $\{P,P'\}^{cc}$ does not contain $P\odot P'$. 
Using the form of generator matrices for equidistant codes, we can show that 
the intersection of all maximal singular subspaces of ${\mathcal P}_{m}$ containing 
the line $\langle P,P'\rangle$ coincides with this line which implies that $\{P,P'\}^{cc}=\{P,P'\}$.
So, lines of ${\mathcal P_m}$ cannot be characterized in terms of 
the binary collinearity relation in a standard way.
Therefore, we need more information on maximal cliques of $\Gamma_m$ which are not 
maximal singular subspaces of ${\mathcal P}_m$. At this moment, we can say only that 
they contain no more than $n$ elements. 

\begin{rem}\label{rem-psts}
A subgeometry of a certain point-line geometry is called 
{\it smooth} \cite{Cameron} or {\it rigid} \cite{Pankov-book}
if every automorphism of this subgeometry can be extended to an automorphism of the geometry. 
By \cite{Cameron}, every partial Steiner triple systems ($\psts$)  
 of order $u\ge 4$ is  rigidly embeddable in a Steiner triple systems ($\sts$) of order $2^{u-1}-1$
and there is a $\psts$ of order $u$ which is not rigidly  embeddable in any smaller $\sts$.
In the next section, we show that the geometry ${\mathcal P}_m$ is rigidly embedded in ${\mathcal P}(S)$, where $S$ is a certain hyperplane of $V$. 
Note that ${\mathcal P}_m$ is a $\psts$ of order $\binom{n}{2m}$ and ${\mathcal P}(S)$ is a $\sts$ of order $2^{n-1}-1$. 
Since $m\ge 1$, we have 
$$n=\binom{n}{1}<\binom{n}{2m},$$
and, consequently,
$$2^{n-1}-1<2^{\binom{n}{2m}-1}-1$$
which means that our $\psts$ is rigidly embedded in a $\sts$ of a significantly smaller order than it is stated in \cite{Cameron}.
\end{rem}

\section{Proof of  Theorem \ref{theorem-aut}}
For $m=1$ the statement is obvious (Remark \ref{rem-case1}) and we assume that $m\ge 2$.

\subsection{First step}\label{subs:firststep}
Let $S$ be the hyperplane of $V$ formed by all vectors $(x_1,\dots,x_n)$ satisfying 
$$x_1+\dots+x_n=0.$$
Then $P_I\in {\mathcal P}(V)$ is contained in $S$ if and only if $|I|$ is even.
It is clear that $P\odot P'$ belongs to ${\mathcal P}(S)$ for any distinct $P,P'\in {\mathcal P}_m$.
Denote by $\overline{{\mathcal P}}_m$ the set of all points $Q\in {\mathcal P}(S)$
such that $Q=P\odot P'$ for some $P,P'\in {\mathcal P}_m$.
If $n\ge 4m$, then $[n]$ contains disjoint $2m$-element subsets and
$$\overline{{\mathcal P}}_m=\bigcup^{2m}_{i=1}{\mathcal P}_i.$$
If $n<4m$, then for $2m$-element subsets $I,J\subset [n]$ the minimum value of $|I\cap J|$ is
$4m-n$ and, consequently, the maximum value of $|I\setminus J|=|J\setminus I|$ is $n-2m$ which implies that 
$$\overline{{\mathcal P}}_m=\bigcup^{n-2m}_{i=1}{\mathcal P}_i.$$
Therefore, $\overline{{\mathcal P}}_m$ is the union of all ${\mathcal P}_i$ such that 
$$i\le \min\{2m, n-2m\}.$$
A direct verification shows that $\overline{{\mathcal P}}_m={\mathcal P}(S)$ only in the case when $n=4m+\epsilon$
with $\epsilon\in \{-1,0,1\}$.

\begin{rem}
Every non-zero vector of $S$ can be presented as the sum of some vectors of Hamming weight $2m$.
Thus ${\mathcal P}(S)$ is the smallest projective space containing ${\mathcal P}_m$.
\end{rem}

Let $P$ be a point belonging to $\overline{{\mathcal P}}_m\setminus {\mathcal P}_m$.
Consider the family ${\mathcal L}_P$ formed by all lines of ${\mathcal P}(S)$ passing through $P$
and containing two points from ${\mathcal P}_m$. 
On this family we define the relation $\sim$ as follows: 
for mutually distinct $P_I,P_J,P_{I'},P_{J'}\in {\mathcal P}_m$
satisfying 
$$P_I\odot P_J=P_{I'} \odot P_{J'}=P$$
we write
$$\langle P_I,P_J\rangle\sim \langle P_{I'}, P_{J'}\rangle$$
if at least one of the points 
\begin{equation}\label{eq-2ponts}
P_I\odot P_{I'}=P_J \odot P_{J'},\qquad P_I\odot P_{J'}=P_J \odot P_{I'}
\end{equation}
belongs to ${\mathcal P}_m$; in other words, the lines 
$\langle P_I,P_J\rangle,\langle P_{I'}, P_{J'}\rangle$ can be extended  to a Pasch configuration
such that the remaining two lines are contained in ${\mathcal P}_m$, see Figure~\ref{fig:veblenplus}.
If the points \eqref{eq-2ponts} both belong to ${\mathcal P}_m$, 
then $\langle P_I,P_J\rangle,\langle P_{I'}, P_{J'}\rangle$ 
span a Fano plane, where all points, except $P$, belong to ${\mathcal P}_m$.

\begin{figure}[ht]
\begin{center}
\includegraphics[scale=0.65]{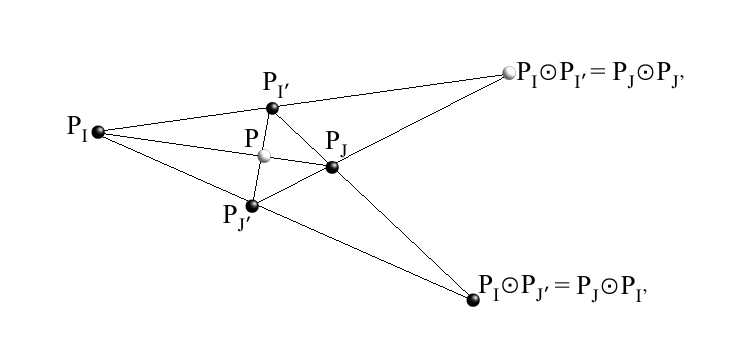}
\caption{Two Pasch configurations with the lines $\langle P_{I}, P_{J}\rangle$ and $\langle P_{I'}, P_{J'}\rangle$.}
\label{fig:veblenplus}
\end{center}
\end{figure}  

\begin{prop}\label{prop-conn}
If $n\ge 3m+1$, then 
for any distinct lines $l,l'\in {\mathcal L}_P$ there is a sequence of lines
$$l=l_0\sim l_1\sim \dots\sim l_m=l'$$
belonging to ${\mathcal L}_P$.
If $n=3m$, then such a sequence exists for any distinct lines $l,l'\in {\mathcal L}_P$ only 
in the case when $P\in {\mathcal P}_1$.
\end{prop}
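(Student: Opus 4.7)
The plan is to parametrise ${\mathcal L}_P$ combinatorially, reduce the relation $\sim$ to a numerical condition, and then treat the two cases of the statement separately. Write $P=P_K$ with $|K|=2k$, so $k\in\{1,\ldots,\min\{2m,n-2m\}\}\setminus\{m\}$. Every $\ell\in{\mathcal L}_P$ has the form $\{P,P_{A\cup C},P_{B\cup C}\}$ for an unordered bipartition $\{A,B\}$ of $K$ into two $k$-subsets and a $(2m-k)$-subset $C\subset[n]\setminus K$, and I denote $\ell=(\{A,B\},C)$. Since $A,A'\subset K$ and $C,C'\subset[n]\setminus K$, the cross-terms vanish in the computation of $|I\cap I'|=|A\cap A'|+|C\cap C'|$, yielding the characterisation
\[
\ell\sim\ell'\iff|A\cap A'|+|C\cap C'|=m\ \text{for some labelling of }\{A',B'\}, \qquad(*)
\]
the two possible labellings corresponding to the two non-$P$ points of the third line through $P$ in the Fano plane $\langle\ell,\ell'\rangle$.

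For $n=3m$, we have $|[n]\setminus K|=3m-2k$, so by inclusion--exclusion $|C\cap C'|\ge 2(2m-k)-(3m-2k)=m$; combined with $(*)$ and $|A\cap A'|\ge 0$, this forces $|C\cap C'|=m$ and $|A\cap A'|=0$, hence $A=B'$ (respectively $A=A'$ for the other labelling), so $\{A,B\}=\{A',B'\}$: the relation $\sim$ preserves the bipartition of $K$. When $k\ge 2$ there are $\binom{2k}{k}/2\ge 3$ distinct bipartitions, so $({\mathcal L}_P,\sim)$ splits into several components and no chain joins lines from different ones. When $k=1$ the bipartition is unique, and identifying $\ell$ with $\hat C:=[n]\setminus K\setminus C$, an $(m-1)$-subset of the $(3m-2)$-set $[n]\setminus K$, condition $(*)$ becomes $\hat C\cap\hat{C'}=\emptyset$, so $({\mathcal L}_P,\sim)$ is the Kneser graph on $(m-1)$-subsets of a $(3m-2)$-set. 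Since $3(m-1)\le 3m-2$ there exist three pairwise disjoint $(m-1)$-subsets, giving a triangle, and the standard formula yields diameter $2$; a short parity analysis (using the triangle to switch parity) then upgrades diameter $2$ to a walk of length exactly $m$ between any two distinct vertices.

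For $n\ge 3m+1$, the slack $|\hat C|=n-2m-k\ge m-k+1$ is exactly what allows the bipartition of $K$ to change under a $\sim$-step. Given $\ell=(\{A,B\},C)$ and $\ell'=(\{A',B'\},C')$, I fix a labelling making $\alpha:=|A\cap A'|$ maximal, set $\gamma:=|C\cap C'|$, and construct $\ell_1,\ldots,\ell_{m-1}$ inductively: at step $i$, I choose $A_i\subset K$ with $|A_i|=k$ and $C_i\subset[n]\setminus K$ with $|C_i|=2m-k$ so that both $|A_{i-1}\cap A_i|+|C_{i-1}\cap C_i|=m$ (giving $\ell_{i-1}\sim\ell_i$ via $(*)$) and the pair $(|A_i\cap A'|,|C_i\cap C'|)$ progresses monotonically toward $(k,2m-k)$ at a rate that reaches the target after $m$ steps, so that $\ell_{m-1}\sim\ell_m=\ell'$ closes the chain. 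The bound $|\hat C|\ge m-k+1$ is what makes each $C_i$ realizable as a $(2m-k)$-subset of $[n]\setminus K$ with the prescribed intersections against both $C_{i-1}$ and $C'$.

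The main obstacle will be the explicit construction of the chain in the case $n\ge 3m+1$: at each intermediate line one must simultaneously control its intersection parameters with both the previous line and the target $\ell'$, and terminate in exactly $m$ steps. I expect this to split into a few subcases according to how $\alpha$ and $\gamma$ compare with $k$ and $m-k$, with the hypothesis $n\ge 3m+1$ playing the role of ruling out precisely the rigidity encountered in the $n=3m$ case.
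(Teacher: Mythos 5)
Your parametrisation of ${\mathcal L}_P$ and the criterion $(*)$ are exactly the dictionary the paper uses, and your $n=3m$ case is complete and correct: $|C\cap C'|\ge m$ forces $|A\cap A'|=0$, so $\sim$ preserves the bipartition of $K$, giving disconnection for $k\ge 2$, while for $k=1$ your identification with the Kneser graph $K(3m-2,m-1)$ (connected since $3m-2>2(m-1)$) is, if anything, more explicit than the paper's treatment of that subcase. But in the main case $n\ge 3m+1$ you have announced a construction rather than given one, and the announced scheme is where the entire content of the proposition sits. The existence, at each step, of a pair $(A_i,C_i)$ with prescribed intersection sizes simultaneously against $(A_{i-1},C_{i-1})$ (to get $\ell_{i-1}\sim\ell_i$) and against the target $(A',C')$ (to make ``monotone progress'') is precisely the nontrivial design-type existence question to be solved; you flag it yourself as ``the main obstacle'' and defer it to unspecified subcases. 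The paper resolves it by decoupling the two coordinates rather than by a single monotone scheme: for $k<m$ it first treats $A=A'$ by walking only the $C$-part through the generalized Johnson graph $J(n-2k,\,2m-k,\,m)$ (Lemma \ref{lemma-John}), and for $A\ne A'$ it runs two synchronized walks with $|A_{i-1}\cap A_i|=1$ and $|C_{i-1}\cap C_i|=m-1$, the existence of the $C$-steps resting on $(m-k+1)+(m-1)+(m-k+1)\le n-2k$, which is exactly where $n\ge 3m+1$ enters; for $k>m$ (where $|C|=2m-k<m$, and where $\hat C$ can even be empty, e.g.\ $n=3m+1$, $k=m+1$ --- a regime your slack bound $|\hat C|\ge m-k+1$ is vacuous for and your ``progress in both coordinates'' picture degenerates in) it instead corrects the $C$-part in a single step and then walks the $A$-part through $J(2k,k,k-m)$ with $C$ fixed. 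Until you supply existence arguments of this kind, the $n\ge 3m+1$ half is a gap, not a proof.

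A smaller point: you read the subscript in $l_m=l'$ as demanding a chain of length exactly $m$. The paper's own proof produces chains of variable length (its sequences run to an unspecified $t$), so that subscript is just a label for the last term; your parity manoeuvre in the Kneser graph, and the requirement that your inductive scheme ``terminate in exactly $m$ steps,'' are self-imposed constraints that make the still-missing construction strictly harder than what the proposition needs or what the paper proves.
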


\begin{proof}
Let $P=P_{N_1}$ and $|N_1|=2i$. Then $N_2=[n]\setminus N_1$
contains precisely $n-2i$ elements.
If $P_I,P_J\in {\mathcal P}_m$ and $P_I\odot P_J=P$,
then $I\mathbin{\triangle} J=N_1$ which implies that
$$I=I_1\cup I_2\quad\text{and}\quad J=(N_1\setminus I_1)\cup I_2,$$
where $I_1$ is an $i$-element subset of $N_1$ and $I_2$ is a $(2m-i)$-element subset of $N_2$.
Similarly, for other $P_{I'},P_{J'}\in {\mathcal P}_m$ satisfying $P_{I'}\odot P_{J'}=P$ we have
$$I'=I'_1\cup I'_2\quad\text{and}\quad J'=(N_1\setminus I'_1)\cup I'_2,$$
where $I'_1$ is an $i$-element subset of $N_1$ and $I'_2$ is a $(2m-i)$-element subset of $N_2$.
The following three conditions are equivalent:
\begin{itemize}
\item $|I\cap I'|=|I_1\cap I'_1|+|I_2\cap I'_2|=m$;
\item $|J\cap J'|=\bigl|(N_1\setminus I_1)\cap (N_1\setminus I'_1)\bigr|+|I_2\cap I'_2|=m$;
\item $P_I\odot P_{I'}=P_J\odot P_{J'}$ belongs to ${\mathcal P}_m$.
\end{itemize}
For the same reason, the following three conditions also are equivalent:
\begin{itemize}
\item $|I\cap J'|=|I_1\cap (N_1\setminus I'_1)|+|I_2\cap I'_2|=m$;
\item $|J\cap I'|=\bigl|(N_1\setminus I_1)\cap I'_1\bigr|+|I_2\cap I'_2|=m$;
\item$P_I\odot P_{J'}=P_J\odot P_{I'}$ belongs to ${\mathcal P}_m$.
\end{itemize}

{\it The case $n\ge 3m+1$.}
Suppose that $i<m$. 
There are pairs of $(2m-i)$-element subsets of $N_2$ whose intersection contains precisely $m-1$ elements and,
consequently, pairs of such subsets with an $m$-element intersection also exist. 
Indeed,
$$(m-i+1)+(m-1)+(m-i+1)=3m-2i+1\le n-2i.$$
First, we consider the case when $I_1=I'_1$ and $I_2\ne I'_2$.
By Lemma \ref{lemma-John} (see Appendix),
there is a sequence of $(2m-i)$-element subsets 
$$I_2=I^0_2,I^1_2,\dots,I^t_2=I'_2\subset N_2$$
such that 
$$|I^{u-1}_2\cap I^{u}_2|=m$$
for every $u\in\{1,\dots,t\}$.
If 
$$I^u=I_1\cup I^u_2\quad\text{and}\quad J^u=(N_1\setminus I_1)\cup I^u_2,$$
then
$$|I^{u-1}\cap J^u|=|I_1\cap (N_1\setminus I_1)|+|I^{u-1}_2\cap I^{u}_2|=m$$
which implies that 
$$\langle P_I,P_J\rangle =\langle P_{I^0},P_{J^0}\rangle \sim\langle P_{I^1},P_{J^1}\rangle \sim\dots \sim \langle P_{I^t},P_{J^t}\rangle=\langle P_{I'},P_{J'}\rangle.$$
Now,  we assume that  $I_1\ne I'_1$.
Lemma \ref{lemma-John} implies the existence of  a sequence of $2i$-element subsets 
$$I_1=I^0_1,I^1_1,\dots,I^t_1=I'_1\subset N_1$$
such that 
$$|I^{u-1}_1\cap I^{u}_1|=1$$
for every $u\in\{1,\dots,t\}$.
We take any sequence of $(2m-i)$-element subsets
$$I_2=I^0_2,I^1_2,\dots,I^t_2\subset N_2$$
such that 
$$|I^{u-1}_2\cap I^{u}_2|=m-1$$
for every  $u\in\{1,\dots,t\}$
(it was noted above that pairs of $(2m-i)$-element subsets with such intersections exist).
If
$$I^u=I^u_1\cup I^u_2\quad\text{and}\quad J^u=(N_1\setminus I^u_1)\cup I^u_2,$$
then for every  $u\in\{1,\dots,t\}$ we have
$$|I^{u-1}\cap I^u|=|I^{u-1}_1\cap I^{u}_1|+|I^{u-1}_2\cap I^{u}_2|=1+(m-1)=m$$
which means that 
$$\langle P_I,P_J\rangle =\langle P_{I^0},P_{J^0}\rangle \sim\langle P_{I^1},P_{J^1}\rangle \sim\dots \sim \langle P_{I^t},P_{J^t}\rangle.$$
Since 
$$I^t=I'_1\cup I^t_2\quad\text{and}\quad J^t=(N_1\setminus I'_1)\cup I^t_2,$$
the lines $\langle P_{I^t},P_{J^t}\rangle$ and 
$\langle P_{I'},P_{J'}\rangle$ are connected by a sequence of lines 
from ${\mathcal L}_P$ via the relation $\sim$.

Suppose that $i>m$. 
Then $0\le 2m-i<m$. Note that $N_2=\emptyset$ if $i=2m$.
In the case when $I_2\ne I'_2$, we take any $i$-element subset $I''_1\subset N_1$ such that
$$|I'_1\cap I''_1|=m-|I_2\cap I'_2|$$
and define
$$I''=I''_1\cup I_2,\;\;\; J''=(N_1\setminus I''_1)\cup I_2;$$
since 
$$|I'\cap I''|=|I'_1\cap I''_1|+|I_2\cap I'_2|=m,$$
we obtain that
$$\langle P_{I'},P_{J'}\rangle \sim\langle P_{I''},P_{J''}\rangle.$$
So, we can assume that $I_2=I'_2$. Consider a sequence of $i$-element subsets
$$I_1=I^0_1,I^1_1,\dots,I^t_1=I'_1\subset N_1$$
such that 
$$|I^{u-1}_1\cap I^{u}_1|=i-m$$
for every $u\in \{1,\dots,t\}$ (Lemma \ref{lemma-John}).
If
$$I^u=I^u_1\cup I_2\quad\text{and}\quad J^u=(N_1\setminus I^u_1)\cup I_2,$$
then
$$|I^{u-1}\cap I^u|=|I^{u-1}_1\cap I^{u}_1|+|I_2|=i-m+2m-i=m$$
which means that 
$$\langle P_I,P_J\rangle =\langle P_{I^0},P_{J^0}\rangle \sim\langle P_{I^1},P_{J^1}\rangle \sim\dots \sim \langle P_{I^t},P_{J^t}\rangle=\langle P_{I'},P_{J'}\rangle.$$

{\it The case $n=3m$.}
Then $i<m$. The equality
$$2(2m-i)-m=3m-2i=n-2i$$
shows that 
for $(2m-i)$-element subsets $A,B\subset N_2$
the minimum value of $|A\cap B|$ is $m$.
Therefore, 
$\langle P_I,P_J\rangle \sim\langle P_{I'},P_{J'}\rangle$ if and only if 
$|I_2\cap I'_2|=m$ and $I_1$ coincides with $I'_1$ or $N_1\setminus I'_1$.
This means that $\langle P_I,P_J\rangle$ and $\langle P_{I'},P_{J'}\rangle$ are connected 
by a sequence of lines from ${\mathcal L}_P$ (via the relation $\sim$) only in the case when 
$I_1$ coincides with $I'_1$ or $N_1\setminus I'_1$. The latter is always true only for $i=1$.
\end{proof}

Let  $P_X\in{\overline {\mathcal P}}_m$.
Then $|X|=2i$ and $1\le i\le  \min\{2m, n-2m\}$.
We count  all the lines passing through $P_X$ whose two remaining points
belong to ${\mathcal P}_m$, i.e. all the lines $\langle P_I,P_J\rangle$ such that 
$$|I|=|J|=2m\quad\text{and}\quad I\mathbin{\triangle} J=X.$$
If we choose an $i$-element subset $A\subset X$ and 
a $(2m-i)$-element subset $B\subset [n]\setminus X$,
then 
$$I=A\cup B\quad\text{and}\quad J=(X\setminus A)\cup B$$
are as required.
So, the number of such lines is
$$\lambda_{i} = \frac{1}{2}\binom{2i}{i}\binom{n-2i}{2m-i}.$$
The inequality $i\le \min\{2m, n-2m\}$ guarantees that $2m-i\le n-2i$ and, consequently, 
$n=2i$ implies that $i=2m$ (recall that $\binom{0}{0}=1$).

\begin{lemma}\label{lemma-lambda}
Let $i$ be an integer satisfying $2\le i\le \min\{2m, n-2m\}$.
Then $\lambda_i\ne \lambda_1$ except in the following cases:
\begin{itemize}
\item
$\lambda_1=\lambda_{2m-1}$ if $n=4m-1$ or $n=4m$;
\item
$\lambda_1=\lambda_{2m}$ if $n=4m+1$.
\end{itemize}
\end{lemma}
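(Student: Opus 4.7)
My plan is to write $\lambda_1-\lambda_i$ as a binomial coefficient times a three-term bracket plus a manifestly non-negative remainder, then to analyse the sign of the bracket and identify the exceptional cases.

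First I would use $\binom{2i}{i}=2\binom{2i-1}{i-1}$ to rewrite $\lambda_i=\binom{2i-1}{i-1}\binom{n-2i}{2m-i}$ and $\lambda_1=\binom{n-2}{2m-1}$. Splitting $n-2=(2i-2)+(n-2i)$ and applying Vandermonde's convolution gives
\[
\lambda_1=\sum_{k}\binom{2i-2}{k}\binom{n-2i}{2m-1-k}.
\]
Pascal's rule and the symmetry $\binom{2i-2}{i-2}=\binom{2i-2}{i}$ yield $\binom{2i-1}{i-1}=\binom{2i-2}{i-1}+\binom{2i-2}{i}$. Pairing the $k=i-1$ term of the Vandermonde sum with the first summand of this decomposition of $\lambda_i$ and grouping the adjacent $k=i-2$ and $k=i$ terms, I obtain
\[
\lambda_1-\lambda_i=\binom{2i-2}{i}\,B(N,K)+R,
\]
where $N=n-2i$, $K=2m-i$, $B(N,K):=\binom{N}{K-1}-\binom{N}{K}+\binom{N}{K+1}$, and $R\ge 0$ collects the Vandermonde terms with $k\notin\{i-2,i-1,i\}$.

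Next I would analyse the sign of $B(N,K)$. Since $2\le i\le\min\{2m,n-2m\}$, one checks $0\le K\le N$. At the boundary $K=0$ and $K=N$ direct computation gives $B=N-1$. For interior $1\le K\le N-1$, the ratios $\binom{N}{K-1}/\binom{N}{K}=K/(N-K+1)$ and $\binom{N}{K+1}/\binom{N}{K}=(N-K)/(K+1)$ reduce $B\ge 0$ to $K^2+(N-K)^2-K(N-K)\ge 1$, equivalently $(N-2K)^2+K(N-K)\ge 1$, which is strict except at $(N,K)=(2,1)$. Hence $B(N,K)>0$ except at $(N,K)\in\{(1,0),(1,1),(2,1)\}$, where $B=0$, and at $(N,K)=(0,0)$, where $B=-1$. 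Translating back via $N=n-2i$, $K=2m-i$, these four critical pairs correspond to $(n,i)=(4m+1,2m)$, $(4m-1,2m-1)$, $(4m,2m-1)$, and $(4m,2m)$, respectively.

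To finish, note that $\binom{2i-2}{i}\ge 1$ for $i\ge 2$, so $B>0$ together with $R\ge 0$ forces $\lambda_1-\lambda_i>0$ and thus $\lambda_i\ne\lambda_1$. In each of the three zero cases for $B$ a direct check gives $R=0$ as well: in all of them $N\le 2$ and $i\in\{2m-1,2m\}$, so the support $\{2m-1-N,\ldots,2m-1\}$ of $\binom{N}{2m-1-k}$ is contained in $\{i-2,i-1,i\}$; this yields the listed equalities $\lambda_1=\lambda_i$. In the remaining degenerate case $(n,i)=(4m,2m)$, again $R=0$ and one verifies $\lambda_{2m}-\lambda_1=\binom{4m-1}{2m-1}-\binom{4m-2}{2m-1}=\binom{4m-2}{2m-2}>0$, so $\lambda_i\ne\lambda_1$. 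The main obstacle is the sharp sign analysis of $B(N,K)$ together with checking that $R$ vanishes at each critical point; both are elementary but must be executed carefully at the boundary $K\in\{0,N\}$.
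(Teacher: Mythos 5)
Your proof is correct, but it takes a genuinely different route from the paper's. The paper also halves $\binom{2i}{i}$ and compares against $\lambda_1=\binom{n-2}{2m-1}$, but instead of an algebraic decomposition it argues combinatorially: after Pascal's rule, $\lambda_i$ counts exactly those $2m$-element subsets $Y\subset[n-1]$ containing $1$ with $|[2i]\cap Y|\in\{i,i+1\}$ (only the condition $|[2i]\cap Y|=i+1$ when $2m-i=n-2i$), a subfamily of the $\lambda_1$ subsets; hence $\lambda_i\le\lambda_1$ for $1<i<2m$, with equality only when every such $Y$ automatically satisfies the constraint, which forces $2i=n-1$ or $2i=n-2$ and then $i=2m-1$, $n=4m-1$ or $n=4m$. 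The boundary case $i=2m$ is handled separately there via $\lambda_{2m}=\frac12\binom{4m}{2m}=\binom{4m-1}{2m-1}$. Your Vandermonde splitting $\lambda_1=\sum_k\binom{2i-2}{k}\binom{n-2i}{2m-1-k}$, the cancellation of the $k=i-1$ term against $\binom{2i-2}{i-1}\binom{N}{K}$, and the sign analysis of $B(N,K)=\binom{N}{K-1}-\binom{N}{K}+\binom{N}{K+1}$ via the identity $B\binom{N}{K}^{-1}(N-K+1)(K+1)=(N-2K)^2+K(N-K)-1$ treat all $i$ uniformly, including $i=2m$, where $B(N,0)=N-1$ reproduces the paper's three-way comparison of $\lambda_{2m}$ with $\lambda_1$ for $n=4m$, $4m+1$, $>4m+1$ in one stroke. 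I checked the bookkeeping: the constraint $2\le i\le\min\{2m,n-2m\}$ does give $0\le K\le N$; the zeros $(N,K)\in\{(1,0),(1,1),(2,1)\}$ translate correctly to $(n,i)=(4m+1,2m),(4m-1,2m-1),(4m,2m-1)$; and in each critical case, as well as at $(N,K)=(0,0)$, the support of the Vandermonde sum lies in $\{i-2,i-1,i\}$, so $R=0$ and the claimed equalities, together with $\lambda_{2m}-\lambda_1=\binom{4m-2}{2m-2}>0$ for $n=4m$, follow. What the paper's argument buys is a conceptual one-line reason for $\lambda_i\le\lambda_1$ (an inclusion of families of subsets); what yours buys is uniformity across all $i$ and an explicit closed form for $\lambda_1-\lambda_i$, at the price of careful conventions for out-of-range binomial coefficients at the boundaries $K\in\{0,N\}$, which you do handle correctly.
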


\begin{proof}
First of all, we observe that 
$$\lambda_1=\binom{n-2}{2m-1}$$
is the number of all $2m$-element subsets of $[n-1]$ containing fixed $j\in [n-1]$, say $j=1$. 
If $n\ge 4m$, then 
$$\lambda_{2m}=\frac{1}{2}\binom{4m}{2m}=\frac{1}{2}\Bigg[\binom{4m-1}{2m-1}+\binom{4m-1}{2m}\Bigg]=
\binom{4m-1}{2m-1}$$
is the number of all $2m$-element subsets of $[4m]$ containing $1$ which implies the following:
\begin{itemize}
\item $\lambda_{2m}>\lambda_1$ if $n=4m$;
\item $\lambda_{2m}=\lambda_1$ if $n=4m+1$;
\item $\lambda_{2m}<\lambda_1$ if $n>4m+1$.
\end{itemize}
From this moment, we assume that $1<i<2m$.
Then $1\le 2m-i\le n-2i$.
If $2m-i<n-2i$, then
\begin{multline*}
\lambda_i=\frac{1}{2}\binom{2i}{i}\binom{n-2i}{2m-i}=
\binom{2i-1}{i}\Bigg[\binom{n-2i-1}{2m-i}+\binom{n-2i-1}{2m-i-1}\Bigg] = \\
=\Bigg[\binom{2i-1}{i-1}\binom{n-2i-1}{2m-i}+\binom{2i-1}{i}\binom{n-2i-1}{2m-i-1}\Bigg];
\end{multline*}
in the case when $2m-i=n-2i$, we have
$$\lambda_i=\binom{2i-1}{i}=\binom{2i-1}{i}\binom{n-2i-1}{2m-i-1}.$$
Let $I=[2i]$. 
If $2m-i<n-2i$, then $\lambda_i$ is the number of all $2m$-element subsets $Y\subset [n-1]$ containing $1$
and such that 
\begin{equation}\label{eq:special}
|I\cap Y|=i\quad\text{or}\quad |I\cap Y|=i+1.
\end{equation}
If $2m-i=n-2i$, then 
$\lambda_i$ is the number of all $2m$-element subsets 
$Y\subset [n-1]$ containing $1$ and 
satisfying the second equality only.
This means that $\lambda_i\le \lambda_1$.

The equality $\lambda_i= \lambda_1$ implies that one of the following possibilities is realized:
\begin{itemize}
\item $2m-i=n-2i$ and any  $2m$-element subset 
$Y\subset [n-1]$ containing $1$
satisfies the second equality of \eqref{eq:special};
\item $2m-i<n-2i$ and any  $2m$-element subset 
$Y\subset [n-1]$ containing $1$
satisfies \eqref{eq:special}.
\end{itemize}
In the first case, $2i=n-1$ (otherwise, 
$|I\cap Y|$ takes more than one value)
and, consequently, 
$$i+1=|I\cap Y|=|Y|=2m$$
which means that $i=2m-1$ and $n=4m-1$.
In the second case, we have $2i=n-2$ 
(otherwise, $|I\cap Y|$ takes precisely one or more than one value); 
then 
$$|I\cap Y|=2m-1\quad\text{or}\quad |I\cap Y|=|Y|=2m$$
which shows that $i=2m-1$ and $n=4m$.
\end{proof}

\begin{lemma}\label{lemma-def}
Suppose that $P,Q,P',Q'$ are mutually distinct points of  ${\mathcal P}_m$ such that
$P\odot Q=P'\odot Q'$ and one of the following possibilities is realized:
\begin{itemize}
\item $n\ge 3m+1$,
\item $n=3m$ and the point $P\odot Q=P'\odot Q'$ belongs to ${\mathcal P}_1$.
\end{itemize}
Then 
$$f(P)\odot f(Q)=f(P')\odot f(Q').$$
\end{lemma}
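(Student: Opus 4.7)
The plan is to write $f$ for the automorphism of $\mathcal{P}_m$ under consideration and split on whether $R := P\odot Q = P'\odot Q'$ lies in $\mathcal{P}_m$ or not. If $R\in\mathcal{P}_m$, then $\{P,Q,R\}$ and $\{P',Q',R\}$ are lines of $\mathcal{P}_m$, so $f$ sends them to lines of $\mathcal{P}_m$; in particular $f(P)\odot f(Q)=f(R)=f(P')\odot f(Q')$ and the lemma is immediate. Hence I may assume $R\in\overline{\mathcal{P}}_m\setminus\mathcal{P}_m$, which is exactly the setting in which $\mathcal{L}_R$ and the relation $\sim$ were defined.

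Under either hypothesis of the lemma, Proposition \ref{prop-conn} then supplies a finite chain
\[
\langle P,Q\rangle = l_0 \sim l_1 \sim \dots \sim l_k = \langle P',Q'\rangle
\]
of lines in $\mathcal{L}_R$. By transitivity, the lemma will follow once I establish the following one-step claim: whenever $\langle A,B\rangle\sim\langle C,D\rangle$ with $A,B,C,D\in\mathcal{P}_m$ mutually distinct and $A\odot B=C\odot D=R$, one has $f(A)\odot f(B)=f(C)\odot f(D)$.

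To prove the one-step claim, I invoke the definition of $\sim$: at least one of the two cross-sums $A\odot C$ or $A\odot D$ lies in $\mathcal{P}_m$. Assume without loss of generality that $A\odot C\in\mathcal{P}_m$ (the other case is symmetric). Since we work over $\mathbb{F}$, the identity $A+B=C+D$ in $V$ rearranges to $A+C=B+D$, so $B\odot D=A\odot C$ and, in particular, $B\odot D\in\mathcal{P}_m$ as well. Both $\{A,C,A\odot C\}$ and $\{B,D,A\odot C\}$ are therefore lines of $\mathcal{P}_m$, and since $f$ preserves lines of $\mathcal{P}_m$, applying $f$ gives $f(A)+f(C)=f(A\odot C)=f(B)+f(D)$ in $V$. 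One further rearrangement over $\mathbb{F}$ yields $f(A)+f(B)=f(C)+f(D)$, i.e.\ $f(A)\odot f(B)=f(C)\odot f(D)$, as desired.

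The argument thus reduces to two ingredients: the connectedness statement of Proposition \ref{prop-conn}, which is where the restrictions on $n$ and on $R$ in the hypothesis actually enter, and the elementary linear identity $A+B=C+D\iff A+C=B+D$ over $\mathbb{F}$, which is what allows a single collinearity in $\mathcal{P}_m$ (say among $A,C,A\odot C$) to force the second collinearity among $B,D,A\odot C$. The only real potential obstacle is bookkeeping — one must keep clear which of the two cross-sums is assumed to lie in $\mathcal{P}_m$ at each step of the chain — but no further input from the structure of $\mathcal{P}_m$ is needed beyond what Proposition \ref{prop-conn} already provides.
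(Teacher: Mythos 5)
Your proposal is correct and follows essentially the same route as the paper: reduce to a single $\sim$-step via Proposition \ref{prop-conn}, then settle that step using the two lines of ${\mathcal P}_m$ guaranteed by the definition of $\sim$ (your identity $B\odot D=A\odot C$ and the rearrangement over ${\mathbb F}$ is just the algebraic rendering of the paper's Pasch-configuration argument). The only cosmetic difference is that you explicitly dispatch the trivial case $P\odot Q\in{\mathcal P}_m$, which the paper leaves implicit.
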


\begin{proof}
If  $\langle P,Q\rangle \sim \langle P',Q'\rangle$, 
then the lines $\langle P,Q\rangle$ and $\langle P',Q'\rangle$ are contained
in a Pasch configuration, where the remaining two lines $l_1,l_2$ are formed by points of ${\mathcal P}_m$.
The points $P,Q,P',Q'$ belong to $l_1\cup l_2$. 
Since $f$ is an automorphism of ${\mathcal P}_m$, 
the lines $\langle f(P),f(Q)\rangle$ and $\langle f(P'),f(Q')\rangle$ are in 
a Pasch configuration whose remaining two lines are $f(l_1),f(l_2)$ which implies the required equality.
In the general case, we use Proposition \ref{prop-conn}.
\end{proof}

If $n\ge 3m+1$, then
Lemma \ref{lemma-def} shows that $f$ can be extended to a transformation of $\overline{{\mathcal P}}_m$
as follows: for every point $S\in \overline{{\mathcal P}}_m\setminus {\mathcal P}_m$ we take 
points $P,Q\in {\mathcal P}_m$  satisfying $P\odot Q=S$ and define
$$f(S)=f(P)\odot f(Q).$$
This extension is bijective and preserves in both directions 
the family of lines containing at least two points from ${\mathcal P}_m$.
By Lemma \ref{lemma-lambda}, we obtain that $f({\mathcal P}_1)={\mathcal P}_1$ if 
$n\ne 4m+\epsilon$ with $\epsilon\in \{-1,0,1\}$.

In the case when $n=3m$, we extend $f$ to an injective map of ${\mathcal P}_1\cup {\mathcal P}_m$
to $\overline{{\mathcal P}}_m$.
If  $P\in {\mathcal P}_1$, then there are $\lambda_1$ lines passing through $f(P)$ and containing two points from 
${\mathcal P}_m$.
Since $\min\{2m, n-2m\}=m$ and $\lambda_1>\lambda_i$ if $1<i\le m$ (see the proof of Lemma \ref{lemma-lambda}),
$f(P)$ belongs to ${\mathcal P}_1$.
So, our extension is a bijective transformation of ${\mathcal P}_1\cup {\mathcal P}_m$.
It preservers in both directions 
the family of lines contained in ${\mathcal P}_1\cup {\mathcal P}_m$ and containing
at least two points from ${\mathcal P}_m$.

\subsection{Second step}
We will need some generalizations of Proposition \ref{prop-coll-graph}.

\begin{lemma}\label{lemma-tex}
If $P_I,P_J\in {\overline {\mathcal P}}_m$ are of the same Hamming weight not greater than $2m$ and the Hamming weight of $P_I\odot P_J$ is also  not greater than $2m$, 
then there is $P_A\in {\mathcal P}_m$ such that $P_A\odot P_I$ and $P_A\odot P_J$ belong to ${\mathcal P}_m$.
\end{lemma}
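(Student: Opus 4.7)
The plan is to construct $A$ explicitly from the four-cell partition of $[n]$ determined by $I$ and $J$. First, write $|I|=|J|=2a$ and $|I\mathbin{\triangle} J|=2b$; the hypotheses give $1\le a\le m$ and $1\le b\le m$ (with $b\ge 1$ because $I\ne J$), while the bound $|I\mathbin{\triangle} J|\le|I|+|J|$ automatically yields $b\le 2a$. The cells
\[
I\cap J,\qquad I\setminus J,\qquad J\setminus I,\qquad [n]\setminus(I\cup J)
\]
have sizes $2a-b$, $b$, $b$, $n-2a-b$ respectively. I would look for $A\subset[n]$ with $|A|=2m$ meeting them in $y$, $x$, $x$, $z$ elements --- the two middle intersections must coincide in size if $|A\cap I|=|A\cap J|=a$ is to hold.

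The equations $y+x=a$ and $y+2x+z=2m$ then determine $y=a-x$ and $z=2m-a-x$, so the problem reduces to finding a single integer $x$ with
\[
\max(0,\,b-a,\,2m+a+b-n)\;\le\;x\;\le\;\min(a,\,b,\,2m-a).
\]
Next I would verify the nine pairwise comparisons between a lower bound and an upper bound; each rearranges to one of the standing inequalities $a\le m$, $b\le m$, $b\le 2a$, $n\ge 3m$. For example, $2m+a+b-n\le 2m-a$ becomes $2a+b\le n$, which follows from $2a+b\le 3m\le n$, and $b-a\le a$ is exactly $b\le 2a$. Any integer $x$ in the resulting non-empty interval, together with arbitrary choices of subsets of the corresponding cells of the prescribed sizes, would yield the desired $A$.

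To close, $|A\cap I|=|A\cap J|=a$ gives $|A\mathbin{\triangle} I|=|A\mathbin{\triangle} J|=2m$, and $A\ne I$, $A\ne J$ follow either from a size comparison (when $|I|<2m$) or from $|A\cap I|=m<2m=|I|$ (when $a=m$); hence $P_A\odot P_I$ and $P_A\odot P_J$ are well-defined points of ${\mathcal P}_m$. The one mildly tedious step will be the nine-inequality check, but each comparison is a one-line rearrangement, so I do not expect any substantive obstacle.
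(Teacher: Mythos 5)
Your proposal is correct and takes essentially the same approach as the paper: both reduce the lemma to constructing a $2m$-set $A$ with $|A\cap I|=|A\cap J|=a$ built from the four-cell partition of $[n]$, the only difference being that the paper fixes the middle-cell intersection size at $x=\min(a,b)$ (splitting into the two cases $a\le b$ and $a>b$, in its notation $i\le t$ and $i>t$), whereas you determine the full feasibility interval for $x$ and check it is non-empty. Incidentally, your conclusion $|A\mathbin{\triangle} I|=|A\mathbin{\triangle} J|=2m$ is the right one; the paper's proof states this quantity as $m$, an evident typo.
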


\begin{proof}
Let $|I|=|J|=2i\le 2m$. 
We need to construct a $2m$-element subset $A\subset [n]$ such that
$$|A\cap I|=|A\cap J|=i.$$
Indeed, the latter implies that $|A\mathbin{\triangle} I|=|A\mathbin{\triangle} J|=m$.

Suppose that $|I\setminus J|=|J\setminus I|$ is equal to $t$. Then
$$|I\cap J|=2i-t,\quad |I\cup J|=2i+t,\quad
\bigl|[n]\setminus (I\cup J)\bigr|\ge 3m-2i-t.$$
Since $|I\mathbin{\triangle} J|=2t\le m$, we obtain that $t\le m$.

In the case when $i\le t$, we have 
$$|I\setminus J|=|J\setminus I|\ge i.$$
The inequality $t\le m$ shows that
$$\bigl|[n]\setminus (I\cup J)\bigr|\ge 3m-2i-t\ge 2m-2i.$$
We take $i$-element subsets 
$$A_1\subset I\setminus J,\qquad A_2\subset J\setminus I$$
and a $(2m-2i)$-element subset $$A_3\subset [n]\setminus (I\cup J).$$
Then $A=A_1\cup A_2\cup A_3$ is as required.

Suppose that $i>t$ and choose an $(i-t)$-element subset $A_1\subset I\cap J$.
Since $i\le m$, we obtain that
$$|[n]\setminus (I\cup J)|\ge 3m-2i-t\ge 2m-i-t,$$
i.e. there is a $(2m -i-t)$-element subset $$A_2\subset [n]\setminus (I\cup J).$$
Then 
$$A=(I\setminus J)\cup (J\setminus I)\cup A_1\cup A_2$$
is as required.
\end{proof}

For the case when $n=4m+\epsilon$, $\epsilon\in \{-1,0,1\}$
we establish a more strong statement.

\begin{lemma}\label{lemma-gen-coll-graph}
Suppose that $n=4m+\epsilon$, $\epsilon\in \{-1,0,1\}$.
Then for any $P_I,P_J\in {\overline {\mathcal P}}_m$ there is $P_A\in {\mathcal P}_m$ such that $P_A\odot P_I$ and $P_A\odot P_J$ belong to ${\mathcal P}_m$.
\end{lemma}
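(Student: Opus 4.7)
The plan is to reduce the lemma to a purely set-theoretic feasibility question. Writing $|I| = 2i$ and $|J| = 2j$ (so that $1 \le i, j \le \min\{2m, n-2m\}$ since $P_I, P_J \in \overline{\mathcal{P}}_m$), and $|I \cap J| = p$, I first observe that the conclusion is equivalent to exhibiting a $2m$-subset $A \subset [n]$ with $|A \cap I| = i$ and $|A \cap J| = j$: indeed, $|A \mathbin{\triangle} I| = 2m + 2i - 2|A \cap I| = 2m$ and similarly $|A \mathbin{\triangle} J| = 2m$, which gives $P_A \odot P_I, P_A \odot P_J \in \mathcal{P}_m$.

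I then partition $[n]$ into the four blocks $I \cap J$, $I \setminus J$, $J \setminus I$, $[n] \setminus (I \cup J)$ of sizes $p$, $2i - p$, $2j - p$, $n - 2i - 2j + p$, and parametrize $A$ by $a := |A \cap I \cap J|$. The three linear conditions $|A \cap I| = i$, $|A \cap J| = j$, $|A| = 2m$ force $|A \cap (I \setminus J)| = i - a$, $|A \cap (J \setminus I)| = j - a$, and $|A \cap ([n] \setminus (I \cup J))| = 2m - i - j + a$. Requiring each of these four numbers to be a nonnegative integer not exceeding the size of its block yields exactly
\begin{equation*}
\max\{0,\, p-i,\, p-j,\, i+j-2m\} \;\le\; a \;\le\; \min\{p,\, i,\, j,\, n-2m-i-j+p\},
\end{equation*}
and the task reduces to producing an integer $a$ in this interval.

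To finish, I check every pairwise inequality ``lower bound $\le$ upper bound''. Fourteen of the sixteen comparisons are routine from $p \le 2i$, $p \le 2j$, $p \le i + j$ and $i, j \le \min\{2m, n - 2m\}$. The only nontrivial ones are the two symmetric inequalities $i + j - 2m \le p$ and $0 \le n - 2m - i - j + p$; both are automatic when $i + j$ is small (from $p \ge 0$), and in the extremal range $i + j > 2m$ I would invoke $|I \cup J| \le n$, which gives $p \ge 2(i+j) - 4m - \epsilon$. This lower bound beats both required thresholds exactly when $i + j - 2m \ge \epsilon$, a condition that is satisfied for every admissible $(i,j)$ precisely because $|\epsilon| \le 1$. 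This is the main obstacle: the hypothesis $n = 4m + \epsilon$ with $\epsilon \in \{-1, 0, 1\}$ is sharp (one can check that $n = 4m + 2$ allows $I, J$ to be disjoint with $|I| = 2m$, $|J| = 2m + 2$, which would force $|A \cap ([n] \setminus (I \cup J))| = -1$). Once the interval is shown to be non-empty, picking any integer $a$ in it and letting $A$ be the union of an $a$-subset of $I \cap J$, an $(i-a)$-subset of $I \setminus J$, a $(j-a)$-subset of $J \setminus I$, and a $(2m-i-j+a)$-subset of $[n] \setminus (I \cup J)$ completes the construction.
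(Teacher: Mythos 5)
Your proof is correct, and it shares the paper's basic reduction: both arguments look for a $2m$-element set $A$ with $|A\cap I|=i$ and $|A\cap J|=j$, assembled from the four blocks $I\cap J$, $I\setminus J$, $J\setminus I$, $[n]\setminus(I\cup J)$. The difference is in how feasibility is established. The paper proceeds by explicit casework --- case (A) $i+j\le 2m$ and case (B) $i+j>2m$, each split according to whether $|I\cap J|\le i$, with case (B) split further --- and in each subcase exhibits a concrete value of the parameter you call $a=|A\cap I\cap J|$ (in the paper's notation, $a=0$, $a=i$, $a=u$ or $a=u'$ depending on the subcase). You instead prove nonemptiness of the full feasibility interval $\max\{0,\,p-i,\,p-j,\,i+j-2m\}\le a\le\min\{p,\,i,\,j,\,n-2m-i-j+p\}$ in one stroke; since all bounds are integers, this is equivalent to checking the pairwise comparisons. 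Your version is more systematic and isolates exactly where the hypothesis enters: the only comparisons that need $n=4m+\epsilon$ reduce to $p\ge i+j-2m$ and $p\ge i+j-2m-\epsilon$, which follow from $p\ge 2(i+j)-n$ together with integrality of $i+j$, precisely because $|\epsilon|\le 1$ leaves no integer in the gaps $2m<i+j<2m+\epsilon$ and $2m+\epsilon<i+j<2m$. It also yields the sharpness example at $n=4m+2$ essentially for free, which the paper does not record. One small imprecision: the bound $p\ge 2(i+j)-4m-\epsilon$ beats the second threshold if and only if $i+j\ge 2m$, not if and only if $i+j-2m\ge\epsilon$; for $\epsilon=-1$ and $i+j=2m-1$ it falls short, but there the trivial bound $p\ge 0$ already suffices, so each threshold is covered by one of the two bounds in every case and the argument closes --- you should just state the two regimes separately rather than claiming a single condition handles both.
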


\begin{proof}
Let $|I|=2i$, $|J|=2j$ and $i\le j$. 
As in the proof of Lemma \ref{lemma-tex},
we construct a $2m$-element subset $A\subset [n]$ such that
$$|A\cap I|=i\quad\text{and}\quad|A\cap J|=j.$$

(A). Suppose that $i+j\le 2m$. So, there is a natural number $u\ge 0$ such that
$$i+j+u=2m.$$
Then 
\begin{equation}\label{eq:caseA}
2i+2j+2u+\epsilon=n
\end{equation}
which implies that 
$$\bigl|[n]\setminus(I\cup J)\bigr|\ge 2u+\epsilon\ge u\quad\text{if}\quad u>0$$
and $[n]\setminus(I\cup J)$ can be empty if $u=0$.

If $|I\cap J|\le i$, then
$$|I\setminus J|\ge i\quad\text{and}\quad |J\setminus I|=2j-|I\cap J|\ge 2j-i\ge j.$$
We take an $i$-element subset $A_1\subset I\setminus J$, a $j$-element subset $A_2\subset J\setminus I$
and a $u$-element subset $A_3\subset [n]\setminus (I\cup J)$.
Then
\begin{equation}\label{eq2-1}
A=A_1\cup A_2\cup A_3
\end{equation}
 is as requited.

If $|I\cap J|>i$, then 
$$|I\setminus J|\le i-1\quad\text{and}\quad |I\cup J|=2j+|I\setminus J|\le 2j+i-1$$
which, by \eqref{eq:caseA}, implies that 
$$\bigl|[n]\setminus(I\cup J)\bigr|\ge n-(2j+i-1)=2u+i\ge u+i.$$
Since $|I\cap J|\le 2i$, we have
$$|J\setminus I|=2j-|I\cap J|\ge 2j-2i\ge j-i.$$
Therefore, there are an $i$-element subset $A_1\subset I\cap J$, a $(j-i)$-element subset $A_2\subset J\setminus I$
and $(u+i)$-element subset $A_3\subset [n]\setminus (I\cup J)$.
As above, we take \eqref{eq2-1}.

(B). Now, we assume that $i+j> 2m$, i.e.  
$$i+j-u=2m$$
for a certain positive integer $u$. 
Then 
\begin{equation}\label{eq:caseB}
2i+2j-2u+\epsilon=n
\end{equation}
which means that
$$|I\cap J|\ge 2u-\epsilon\ge 2u-1.$$
Therefore, each of $2i,2j$ is greater than $2u-1$ and, consequently,
each of $i,j$ is not less than $u$.

Suppose that $|I\cap J|\le i$. 
Since 
$$|I\setminus J|\ge i,\;|J\setminus I|\ge j,$$
$$|I\cap J|\ge 2u-1\ge u,$$
we can choose a $u$-element subset $A_1\subset I\cap J$,
an $(i-u)$-element subset $A_2\subset I\setminus J$ and a $(j-u)$-element subset $A_3\subset J\setminus I$.
Then \eqref{eq2-1} is as required.

In the case when $|I\cap J|> i$, there is a positive integer $u'$ such that $|I\cap J|=i+u'$.
It is clear that $u'\le i$ and
 the following two possibilities appear:
\begin{itemize}
\item $u\ge u'$,
\item $u<u'.$
\end{itemize}
In the first case, we have
$$|I\cap J|\ge 2u-1\ge u,$$
$$|I\setminus J|=2i-|I\cap J|=2i- i-u'=i-u'\ge i-u\ge 0,$$
$$|J\setminus I|=2j-|I\cap J|=2j-i-u'\ge j-u'\ge j-u\ge 0.$$
As above, we choose 
a $u$-element subset $A_1\subset I\cap J$, an $(i-u)$-element subset $A_2\subset I\setminus J$,
a $(j-u)$-element subset $A_3\subset J\setminus I$
and take \eqref{eq2-1}.

In the second case,
we choose a $u'$-element subset $A_1\subset I\cap J$. 
The inequality 
$$|J\setminus I|=2j-i-u'\ge j-u'$$
guarantees the existence of a $(j-u')$-element subset $A_2\subset J\setminus I$. 
Since $|I\setminus J|=i-u'$, the set
$$(I\setminus J)\cup A_1\cup A_2$$
intersects $I$ and $J$ in an $i$-element and $j$-element subset, respectively.
However, this set contains $i+j-u'$ elements only.
On the other hand,
$$|I\cup J |=2j+i-u'.$$
Applying \eqref{eq:caseB} and $u'\ge u+1$ we obtain that
$$\bigl|[n]\setminus(I\cup J)\bigr|= n-2j-i+u'= i+u'-2u+\epsilon \ge i-u.$$ 
Since  $u'\le i$, we have
$$\bigl|[n]\setminus(I\cup J)\bigr|\ge u'-u.$$
For any $(u'-u)$-element subset $A_3\subset [n]\setminus (I\cup J)$
the set
$$A=(I\setminus J)\cup A_1\cup A_2\cup A_3$$
is as required.
\end{proof}

The union of three lines on a Fano plane  is said to be a {\it triangle} 
if the pairwise intersections of these lines are mutually distinct.
A triangle contains precisely three points distinct from the vertices (the intersecting points), these points form a line. 
If a line $l$ contains two points $P_I,P_J\in {\overline {\mathcal P}}_m$ and 
there is a point $P_{A}\in {\mathcal P}_m$ such that $P_A\odot P_I$ and $P_A\odot P_J$ belong to ${\mathcal P}_m$
(this happens, for example, when $P_I,P_J$ satisfy the conditions of
Lemma~\ref{lemma-tex} or $n=4m+\epsilon$ with $\epsilon\in \{-1,0,1\}$),
then the lines 
\begin{equation}\label{eq-3lines}
\langle P_A,P_I\rangle,\quad \langle P_A,P_J\rangle,\quad \langle P_A\odot P_I,P_A\odot P_J\rangle
\end{equation}  
form a triangle whose vertices are points from ${\mathcal P}_m$
and the line $l$ can be characterized as the set of points of this triangle distinct from the vertices.

Suppose that $n\ge 3m+1$.
It was established at the end of the previous subsection that $f$ can be extended to a bijective transformation of ${\overline {\mathcal P}}_m$ which we also denote by $f$.
This extension preservers in both directions the family of all lines contained in ${\overline {\mathcal P}}_m$
and containing at least two points from ${\mathcal P}_m$.
If  $n=4m+\epsilon$ with $\epsilon\in \{-1,0,1\}$,
then $\overline{{\mathcal P}}_m={\mathcal P}(S)$.
We use the above observation together with  Lemma \ref{lemma-gen-coll-graph} to prove the following.

\begin{lemma}\label{lemma-epsilon-case}
If  $n=4m+\epsilon$ with $\epsilon\in \{-1,0,1\}$, then $f$ is induced by a linear automorphism of $S$.
\end{lemma}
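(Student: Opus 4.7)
The plan is to show that $f$, regarded as a bijection of $\overline{{\mathcal P}}_m={\mathcal P}(S)$, preserves every line of ${\mathcal P}(S)$ in both directions. Once this is done, the statement follows at once from the Fundamental Theorem of Projective Geometry: every collineation of ${\mathcal P}(S)$ is induced by a semilinear automorphism of $S$, and because the two-element field admits only the identity field automorphism, every such semilinear automorphism is in fact linear.

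To upgrade the already-known preservation of lines that contain at least two points of ${\mathcal P}_m$ to preservation of \emph{every} line of ${\mathcal P}(S)$, I will exploit the triangle construction recorded immediately before the statement of the lemma. Given an arbitrary line $l=\langle P_I,P_J\rangle$ of ${\mathcal P}(S)$, Lemma~\ref{lemma-gen-coll-graph} furnishes a point $P_A\in{\mathcal P}_m$ with $P_A\odot P_I$ and $P_A\odot P_J$ both in ${\mathcal P}_m$. The three lines in \eqref{eq-3lines} then form a triangle inside the Fano plane spanned by $P_A,P_I,P_J$, whose vertices $P_A,\,P_A\odot P_I,\,P_A\odot P_J$ all lie in ${\mathcal P}_m$, and whose three non-vertex points are exactly the points $P_I,P_J,P_I\odot P_J$ of the line $l$.

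Each of the three sides of this triangle carries two points of ${\mathcal P}_m$, so $f$ sends each of them to a line of ${\mathcal P}(S)$. Bijectivity of $f$ on $\overline{{\mathcal P}}_m$ forces the three image lines to be pairwise distinct and to meet pairwise in the three distinct points $f(P_A),\,f(P_A\odot P_I),\,f(P_A\odot P_J)$, so they again form a non-degenerate triangle inside a Fano subplane of ${\mathcal P}(S)$. In that image Fano plane, the line "opposite" to the triangle consists of exactly the three non-vertex points $f(P_I),\,f(P_J),\,f(P_I\odot P_J)$, which yields $f(P_I)\odot f(P_J)=f(P_I\odot P_J)$. Since $l$ was arbitrary, $f$ is a collineation of ${\mathcal P}(S)$, and the Fundamental Theorem finishes the argument.

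The only delicate point I expect is the verification that the three image lines really do form a non-degenerate triangle and not three concurrent lines; this is precisely where bijectivity of $f$ together with the distinctness of the three vertices of the original triangle is essential. Everything else reduces to the Fano-plane incidence combinatorics already spelled out just before Lemma~\ref{lemma-gen-coll-graph}, so the proof should be short once the triangle characterization of lines is in hand.
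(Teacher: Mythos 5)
Your proposal is correct and follows essentially the same route as the paper's proof: invoke Lemma~\ref{lemma-gen-coll-graph} to realize an arbitrary line of ${\mathcal P}(S)$ as the non-vertex points of a triangle with sides in the $f$-preserved family \eqref{eq-3lines}, deduce that $f$ maps lines to lines, and conclude by the Fundamental Theorem of Projective Geometry. Your explicit check that injectivity of $f$ rules out a degenerate (concurrent) image configuration, and your remark that semilinear means linear over the two-element field, merely spell out details the paper leaves implicit.
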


\begin{proof}
By Lemma \ref{lemma-gen-coll-graph},
for any $P_I,P_J\in {\mathcal P}(S)$  
there is a point $P_{A}\in {\mathcal P}_m$ such that $P_A\odot P_I$ and $P_A\odot P_J$ belong to ${\mathcal P}_m$.
The lines \eqref{eq-3lines} form a triangle whose vertices are points from ${\mathcal P}_m$
and the line $\langle P_I,P_J\rangle$
is the set of points of this triangle distinct from the vertices.
Since $f$ preservers the family of all lines of ${\mathcal P}(S)$
containing at least two points from ${\mathcal P}_m$, 
it transfers \eqref{eq-3lines} to a triangle and $f(\langle P_I,P_J\rangle)$ is 
the set of points of this triangle distinct from the vertices. 
Therefore, $f(\langle P_I,P_J\rangle)$ is a line.
Applying the same arguments to $f^{-1}$ we establish that $f$
preservers the family of all lines of ${\mathcal P}(S)$ in both directions.
The Fundamental Theorem of Projective Geometry gives the claim.
\end{proof}

In the general case, 
we observe that points $P_I,P_J\in {\overline {\mathcal P}}_m$ satisfy the conditions of Lemma \ref{lemma-tex}
if and only if the line $\langle P_I,P_J\rangle$ is contained in 
$\cup^m_{i=1}{\mathcal P}_i$.
As in the proof of Lemma \ref{lemma-epsilon-case}, we show that $f$ sends every such line to a line.

\begin{lemma}\label{lemma-p1}
If $n\ge 3m+1$ and $f({\mathcal P}_1)={\mathcal P}_1$, then $f$
is induced by a coordinate permutation of $V$.
\end{lemma}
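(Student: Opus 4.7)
The plan is to exploit the hypothesis $f({\mathcal P}_1)={\mathcal P}_1$ in three stages: first show that $f$ restricts to an automorphism of the point-line geometry ${\mathcal P}_1$, then extract from this a permutation $\sigma$ of $[n]$, and finally check that $(\sigma^*)^{-1}\circ f$ is the identity on $\overline{{\mathcal P}}_m$, where $\sigma^*$ is the coordinate permutation of $V$ induced by $\sigma$.

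For the first stage, I would take any line $\{P_I,P_J,P_K\}$ of ${\mathcal P}_1$ (with $I,J,K$ the three $2$-element subsets of a $3$-element subset of $[n]$) and apply Lemma \ref{lemma-tex} to $P_I,P_J$; this is legitimate since $m\ge 2$ ensures $|I|=|J|=|I\mathbin{\triangle} J|=2\le 2m$. The resulting $P_A\in{\mathcal P}_m$ with $P_A\odot P_I,\,P_A\odot P_J\in{\mathcal P}_m$ gives a triangle formed by
$$\langle P_A,P_A\odot P_I\rangle,\quad \langle P_A,P_A\odot P_J\rangle,\quad \langle P_A\odot P_I,P_A\odot P_J\rangle,$$
whose three points distinct from the vertices are exactly $P_I$, $P_J$, and $(P_A\odot P_I)\odot(P_A\odot P_J)=P_I\odot P_J=P_K$. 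Each side has two vertices in ${\mathcal P}_m$ and so is preserved by $f$; hence the image is again a triangle whose non-vertex points $f(P_I),f(P_J),f(P_K)$ are collinear in ${\mathcal P}(S)$ and, by hypothesis, all lie in ${\mathcal P}_1$, so they form a line of ${\mathcal P}_1$. Applying the same argument to $f^{-1}$ shows that $f|_{{\mathcal P}_1}$ is an automorphism of the geometry ${\mathcal P}_1$; since its collinearity graph is $J(n,2)$ and $n\ge 3m+1\ge 7>4$, Remark \ref{rem-case1} supplies a permutation $\sigma$ of $[n]$ with $f(P_{\{a,b\}})=P_{\{\sigma(a),\sigma(b)\}}$ for every $\{a,b\}\subset [n]$.

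Setting $g=(\sigma^*)^{-1}\circ f$, I obtain an extended automorphism of $\overline{{\mathcal P}}_m$ that fixes ${\mathcal P}_1$ pointwise. For any $P_I\in{\mathcal P}_m$ and any pair $a\in I$, $b\in[n]\setminus I$, the line $\langle P_I,P_{\{a,b\}}\rangle$ contains the third point $P_{(I\setminus\{a\})\cup\{b\}}\in{\mathcal P}_m$, so $g$ preserves this line; since $g$ fixes $P_{\{a,b\}}$, it must send $P_I$ into $\{P_I,P_{(I\setminus\{a\})\cup\{b\}}\}$. Because $|I|=2m\ge 4$ and $|[n]\setminus I|=n-2m\ge m+1\ge 3$, at least two distinct such pairs $\{a,b\}$ exist, and the corresponding two-element target sets intersect only in $\{P_I\}$, forcing $g(P_I)=P_I$. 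Every $P_X\in\overline{{\mathcal P}}_m$ can be written as $P\odot P'$ with $P,P'\in{\mathcal P}_m$, and the extension formula yields $g(P_X)=g(P)\odot g(P')=P\odot P'=P_X$; hence $f=\sigma^*$ on $\overline{{\mathcal P}}_m$, and in particular on ${\mathcal P}_m$ the map $f$ is induced by the coordinate permutation $\sigma^*$. The step requiring the most care is the triangle construction in the first stage: a ${\mathcal P}_1$-line contains no points of ${\mathcal P}_m$ and is therefore not automatically preserved by the extension of $f$, so Lemma \ref{lemma-tex} is essential for encoding it as the set of non-vertex points of a triangle whose sides are preserved.
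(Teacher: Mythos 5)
Your first stage is sound and coincides with the paper's: the triangle construction via Lemma \ref{lemma-tex} is exactly how the paper (in the paragraph preceding the lemma, following the proof of Lemma \ref{lemma-epsilon-case}) shows that the extended $f$ maps every line contained in $\bigcup_{i=1}^{m}{\mathcal P}_i$ to a line, and the passage through $J(n,2)$ with $n\ge 7$ is also the paper's. The gap is in your second stage, in the step ``since $g$ fixes $P_{\{a,b\}}$, it must send $P_I$ into $\{P_I,P_{(I\setminus\{a\})\cup\{b\}}\}$.'' What the extension actually guarantees is that $g$ maps the line $\langle P_I,P_{\{a,b\}}\rangle$ to \emph{some} line through the fixed point $P_{\{a,b\}}$ with its other two points in ${\mathcal P}_m$; a line through a fixed point need not be setwise invariant, and there are $\lambda_1=\binom{n-2}{2m-1}$ such lines through $P_{\{a,b\}}$. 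The correct consequence of this single line is only that, writing $g(P_I)=P_Y$, the third point $P_{Y\mathbin{\triangle}\{a,b\}}=g(P_{(I\setminus\{a\})\cup\{b\}})$ must lie in ${\mathcal P}_m$, i.e.\ $|Y\cap\{a,b\}|=1$. So your two-line intersection trick collapses, because $g(P_I)$ was never confined to a two-element set. One can try to repair this by imposing $|Y\cap\{a,b\}|=1$ for \emph{all} pairs $a\in I$, $b\notin I$: this forces $Y=I$ when $n\ne 4m$, but when $n=4m$ it also admits $Y=[n]\setminus I$ --- and Lemma \ref{lemma-p1} is invoked precisely in the case $n=4m$ (Subsection \ref{subsec:4m}), so the ambiguity is substantive. (Indeed the complement map is an automorphism of $\Gamma_m$ fixing nothing about lines: $e_{[n]\setminus I}+e_{[n]\setminus J}=e_{I\mathbin{\triangle}J}$, so it fails to preserve lines of ${\mathcal P}_m$; ruling out mixed identity/complement behaviour would still require a further propagation argument along lines.)

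The paper avoids this top-down pinning of ${\mathcal P}_m$-points entirely and works bottom-up instead: having $f=s$ on ${\mathcal P}_1$, it writes each $Q\in{\mathcal P}_2$ as $P\odot P'$ with $P,P'\in{\mathcal P}_1$; since the line $\{P,P',Q\}$ lies in $\bigcup_{i=1}^{m}{\mathcal P}_i$, its image is a line through $f(P)=s(P)$ and $f(P')=s(P')$, whose third point is $s(Q)$, and injectivity of $f$ gives $f(Q)=s(Q)$. Recursively, each $Q\in{\mathcal P}_{i+1}$ ($i+1\le m$) is written as $P\odot P'$ with $P\in{\mathcal P}_1$, $P'\in{\mathcal P}_i$, and the same argument propagates $f=s$ up to ${\mathcal P}_m$. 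The key point you missed is that the third point of a line through two points with \emph{known} images is uniquely determined, whereas a line through a single fixed point is not; rewriting your second stage along the paper's recursive scheme closes the gap.
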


\begin{proof}
The assumption $f({\mathcal P}_1)={\mathcal P}_1$ guarantees that $f$ preservers in both directions 
the family of lines contained in ${\mathcal P}_1$.
For distinct  $P_I,P_J\in {\mathcal P}_1$ we have $P_I\odot P_J\in {\mathcal P}_1$  
if and only if $|I\cap J|=1$.
This means that the restriction of $f$ to ${\mathcal P}_1$ induces an automorphism of 
the Johnson graph $J(n,2)$. 
Since $n\ge 7$,
this automorphism  is induced by a permutation on $[n]$ (see Appendix).
So, the restriction of $f$ to ${\mathcal P}_1$ is induced  by a coordinate permutation $s$ of $V$.

For every $Q\in {\mathcal P}_2$ we choose $P,P'\in {\mathcal P}_1$ such that 
$Q=P\odot P'$. Then $f(Q)$ is on the line $\langle f(P),f(P')\rangle$ and
$$f(Q)=f(P)\odot f(P')=s(P)\odot s(P')=s(P\odot P')=s(Q).$$
So, the restriction of $f$ to $\overline{{\mathcal P}}_1={\mathcal P}_1\cup {\mathcal P}_2$
is induced by $s$. 

In the case when $m\ge 3$, 
for every $Q\in {\mathcal P}_3$ we choose $P\in {\mathcal P}_1$ and $P'\in {\mathcal P}_2$ such that 
$Q=P\odot P'$. Then $f(Q)$ is on the line $\langle f(P),f(P')\rangle$ which implies that $f(Q)=s(Q)$.
Recursively, we establish that that $f(Q)=s(Q)$
for every $Q\in \cup^m_{i=1} {\mathcal P}_i$. 
\end{proof}

By Lemma \ref{lemma-lambda}, 
$f({\mathcal P}_1)={\mathcal P}_1$ if $n\ne 4m+\epsilon$ with $\epsilon\in \{-1,0,1\}$.
Hence, Theorem \ref{theorem-aut} is proved for the case when $n\ge 3m+1$ and $n$ is distinct from 
$4m+\epsilon$, $\epsilon\in \{-1,0,1\}$.

\subsection{The case $n=3m$}
In view of Subsection~\ref{subs:firststep}, $f$ can be extended to a bijective transformation of ${\mathcal P}_1\cup {\mathcal P}_m$
which preservers in both directions 
the family of lines contained in ${\mathcal P}_1\cup {\mathcal P}_m$ and containing
at least two points from ${\mathcal P}_m$. 
This extension is denoted by the same symbol $f$. 
Applying Lemma \ref{lemma-tex} to $P,P'\in {\mathcal P}_1$ such that $P\odot P'\in{\mathcal P}_1$
and using the reasoning from the proof of Lemma \ref{lemma-epsilon-case}, we
show that $f$ preservers in both directions the family of all lines contained in ${\mathcal P}_1$.
As in the proof of Lemma \ref{lemma-p1},
the restriction of $f$ to ${\mathcal P}_1$ is induced  by a coordinate permutation $s$ of $V$.
Then $f'=s^{-1}f$ is a bijective transformation of 
${\mathcal P}_1\cup {\mathcal P}_m$ 
whose restriction to ${\mathcal P}_1$ is identity
and $f'$ preserves
in both directions the family of lines contained in ${\mathcal P}_1\cup {\mathcal P}_m$ and containing
at least two points from ${\mathcal P}_m$. 

For every $2m$-element subset $I\subset [n]$ denote by ${\mathcal X}_I$ 
the set of  all $P_J\in {\mathcal P}_1$ such that $|I\cap J|=1$, or equivalently, 
$P_I\odot P_J\in{\mathcal P}_m$
(the line $\langle P_I,P_J\rangle$ contains two points from ${\mathcal P}_m$).
Observe that ${\mathcal P}_1\setminus {\mathcal X}_I$ is formed by all $P_J\in {\mathcal P}_1$ such that
$$J\subset I\quad\text{or}\quad J\subset [n]\setminus I.$$
Therefore, for $2m$-element subsets $I,I'\subset [n]$ 
we have ${\mathcal X}_I={\mathcal X}_{I'}$ if and only if $I=I'$. 
If $f'(P_I)=P_{I'}$ for some $2m$-element $I,I'\subset [n]$, 
then $${\mathcal X}_{I'}=f'({\mathcal X}_I)={\mathcal X}_I$$ which implies that $I=I'$.
So, $f'$ is identity and, consequently, $f$ is induced by $s$.

\subsection{The case $n=4m\pm 1$}\label{subsec:4m-pm-1}
Lemma \ref{lemma-epsilon-case} states that $f$ is induced by a linear automorphism $l$ of $S$.
Then Lemma \ref{lemma-lambda} guarantees that
\begin{equation}\label{eq-p}
l({\mathcal P}_1\cup{\mathcal P}_{(n-1)/2})={\mathcal P}_1\cup{\mathcal P}_{(n-1)/2}.
\end{equation}
By Lemma \ref{lemma-p1}, it suffices to consider the case when
there is $P_I\in{\mathcal P}_1$ such that  
$l(P_I)\in{\mathcal P}_{(n-1)/2}$,
in other words, $l(P_I)=P_{I'}$ and $|I'|=n-1$.
Without loss of generality we assume that $I=\{n-1,n\}$.

Let ${\mathcal X}$ be the subset of  all $P_J\in {\mathcal P}_1$ satisfying $J\subset [n-2]$. 
Note that $P_{\{n-1,n\}}\odot P_J$ belongs to ${\mathcal P}_2$ for every such $J$.
If $l(P_J)\in{\mathcal P}_{(n-1)/2}$, then 
$$l(P_{\{n-1,n\}}\odot P_J)=l(P_{\{n-1,n\}})\odot l(P_J)$$
is a point of ${\mathcal P}_1$ (since $P\odot P'\in{\mathcal P}_1$ for any distinct $P,P'\in {\mathcal P}_{(n-1)/2}$)
which contradicts \eqref{eq-p}.
Therefore, 
$l({\mathcal X})\subset {\mathcal P}_1$.

The restriction of $f$ to ${\mathcal X}$
can be considered as an embedding of $J(n-2,2)$ in $J(n,2)$ sending tops to tops. 
Since $n\ge 7$,
this embedding maps stars to subsets of stars which means that it is induced by a certain injection
of $[n-2]$ to $[n]$ (see Appendix). 
We assume that the restriction of $f$ to ${\mathcal X}$ is identity (otherwise, we consider $sf$, where $s$ is a suitable coordinate permutation).

Taking  $l(e_1)=e_1$ we extend $l$ to a linear automorphism of $V$ which we also denote by $l$.
Using the fact that $l$ leaves every $e_{\{i,j\}}$ with $i,j\in [n-2]$ fixed, we establish that $l(e_i)=e_i$ for every $i\in [n-2]$.

Recall that 
$l(P_{\{n-1,n\}})=P_{I'}$ and $|I'|=n-1$ by our assumption.
If $l(e_i)=e_{I_i}$ for $i\in \{n-1,n\}$, then $I'=I_{n-1}\mathbin{\triangle} I_n$ and one of the following possibilities is realized:
\begin{enumerate}[(a)]
\item
$I_{n-1},I_n$ are proper subsets of $[n]$ and $|I_{n-1}\cap I_n|=1$, $I_{n-1}\cup I_n=[n]$;

\item
$I_{n-1}\cap I_n=\emptyset$ and $I_{n-1}\cup I_n=I'$;

\item
one of $I_{n-1},I_n$, say $I_n$, coincides with $[n]$ and the other is a singleton.
\end{enumerate}
We show that (a) and (b) are impossible.

{\it Case} (a). Since $n\ge 7$, one of $I_{n-1},I_n$, say $I_n$, contains more than $3$ elements. 
If $i\in I_n$  is distinct from $n-1,n$, then
$$l(e_{\{i,n\}})=e_i+e_{I_n}=e_{I_n\setminus\{i\}}\quad\text{and}\quad 2<\bigl|I_n\setminus \{i\}\bigr|<n-1$$ 
which is impossible, since $l(P_{\{i,n\}})$ belongs to ${\mathcal P}_1$ or ${\mathcal P}_{(n-1)/2}$.

{\it Case} (b).
We use the above arguments if one of $I_{n-1},I_n$ contains more than $3$ elements.
If $|I_{n-1}|=|I_n|=3$ (this is possible if $n=7$) and $i\not \in I_n$, then
$$l(e_{\{i,n\}})=e_i+e_{I_n}=e_{I_n\cup\{i\}}\quad\text{and}\quad|I_n\cup \{i\}|=4<n-1,$$
a contradiction.

{\it Case} (c). We have $l(e_n)=e_{[n]}$.
Since $l(e_i)=e_i$ for every $i\le n-2$, we obtain that $l(e_{n-1})$ is $e_{n-1}$ or $e_n$. We assume that $l(e_{n-1})=e_{n-1}$ (in the second case, $f$ can be replaced by $sf$, where $s$ is the transposition of $(n-1)$-th and $n$-th coordinates).
Then $l=l_n$ if  $\epsilon=-1$ (Example \ref{exmp-lin1}). 
In the case when $\epsilon=1$,
the linear automorphism does not preserve ${\mathcal P}_m$.
Indeed, for every $2m$-element $I\subset [n]$ containing $n$ we have
$$l(e_I)=e_{[n]}+e_{I\setminus\{n\}}=e_{[n]\setminus (I\setminus \{n\})}$$
and
$$\bigl|[n]\setminus (I\setminus \{n\})\bigr|=4m+1-(2m-1)=2m+2.$$

\subsection{The case $n=4m$}\label{subsec:4m}
As in the previous subsection,
$f$ is induced by a linear automorphism $l$ of $S$
and
$$
l({\mathcal P}_1\cup{\mathcal P}_{2m-1})={\mathcal P}_1\cup{\mathcal P}_{2m-1}
$$
by Lemma \ref{lemma-lambda}.

\begin{lemma}\label{lemma-obs1}
If a line of ${\mathcal P}(S)$ is contained in ${\mathcal P}_1\cup{\mathcal P}_{2m-1}$,
then it is contained in ${\mathcal P}_1$ or it contains precisely two points from ${\mathcal P}_{2m-1}$.
\end{lemma}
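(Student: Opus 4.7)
The plan is to perform a case analysis on how many of the three collinear points $P_I$, $P_J$, $P_{I\mathbin{\triangle} J}$ lie in ${\mathcal P}_{2m-1}$ rather than ${\mathcal P}_1$. Since every point on the line has Hamming weight $2$ or $4m-2$, there are only four possibilities (zero, one, two, or three of them in ${\mathcal P}_{2m-1}$), and I will show that for $m \ge 2$ the cases one and three cannot occur.

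To rule out the case of exactly one point in ${\mathcal P}_{2m-1}$: two of the three set sizes would be $2$, so their symmetric difference has Hamming weight at most $4$, which cannot equal $4m-2$ when $m \ge 2$. To rule out the case of all three points in ${\mathcal P}_{2m-1}$: setting $|I| = |J| = |I\mathbin{\triangle} J| = 4m-2$ and using $|I\mathbin{\triangle} J| = |I| + |J| - 2|I \cap J|$ forces $|I \cap J| = 2m-1$, whence $|I \cup J| = 6m-3$; but $|I \cup J| \le n = 4m$ gives $m \le 3/2$, contradicting $m \ge 2$.

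The remaining cases, zero and two points in ${\mathcal P}_{2m-1}$, are precisely the two alternatives stated in the lemma: either the line is contained in ${\mathcal P}_1$, or it meets ${\mathcal P}_{2m-1}$ in exactly two points (the third lying in ${\mathcal P}_1$). The whole argument is elementary counting and I do not expect any real obstacle; the only thing to keep in mind is that $m \ge 2$ is used twice, once to kill the weight bound $|I\mathbin{\triangle} J| \le 4$ and once to kill the set-union bound $6m-3 \le 4m$.
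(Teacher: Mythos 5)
Your proof is correct and is exactly the ``direct verification'' that the paper leaves implicit: the paper's entire proof of this lemma is the phrase ``Direct verification,'' and your weight-counting case analysis (ruling out one point of weight $4m-2$ via $|I\mathbin{\triangle} J|\le 4<4m-2$, and three such points via $|I\cup J|=6m-3>4m$) is the intended computation, legitimately using the standing assumption $m\ge 2$ from the start of Section~4.
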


\begin{proof}
Direct verification.
\end{proof}

Denote by $\Gamma$ the simple graph whose vertex set is ${\mathcal P}_1\cup {\mathcal P}_{2m-1}$ and 
distinct $P_I,P_J$ from this set are connected by en edge if $P_I\odot P_J$ belongs to ${\mathcal P}_1\cup {\mathcal P}_{2m-1}$.
This graph can be identified with the union of the Johnson graphs $J(n,2)$ and $J(n,n-2)$, where 
$2$-element and $(n-2)$-element subsets are connected by an edge if their intersection is a singleton. 
Every maximal clique of $J(n,t)$, $t=2,n-2$ consists of $n-1$ or $3$ elements (see Appendix).
A direct verification shows that there are the following four types of maximal cliques of $\Gamma$
containing precisely $n-1$ vertices:
\begin{enumerate}[(1)]
\item
the set of all $P_I$ such that $I$ is a $2$-element subset containing fixed $i\in [n]$ (corresponds to a star of $J(n,2)$);

\item
the set of all $P_I$ such that $I$ is an $(n-2)$-element subset contained in a 
fixed $(n-1)$-element subset of $[n]$ (corresponds to a top of $J(n,n-2)$);

\item
the set formed by a certain $P_I$, $|I|=n-2$ and all $P_J$, $|J|=2$ such 
that $|I\cap J|=1$ and $J$ contains fixed $i\in [n]\setminus I$;

\item
the set formed by a certain $P_I$, $|I|=2$ and all $P_J$, $|J|=n-2$ such 
that $J$ is contained  in a fixed $(n-1)$-element subset 
of $[n]$ which intersects $I$ in a Singleton.
\end{enumerate}
Note that $\Gamma$ contains other maximal cliques whose number of vertices differs from $n-1$.

\begin{lemma}\label{lemma-obs2}
If ${\mathcal A}$ is a maximal clique of type {\rm (1)}, then for every $Q\in {\mathcal P}_1\setminus {\mathcal A}$
there are precisely two $P,P'\in {\mathcal A}$ such that $Q,P,P'$ form a line.
\end{lemma}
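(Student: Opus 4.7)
My plan is to compute directly by writing the clique $\mathcal{A}$ of type (1) in coordinates, parameterizing $Q$ and $P \in \mathcal{A}$, and then solving the equation that characterizes when $\{Q, P, P'\}$ is a line with $P' \in \mathcal{A}$.

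First, I fix notation: since $\mathcal{A}$ is of type (1), there is some $i \in [n]$ such that $\mathcal{A} = \{P_{\{i,j\}} : j \in [n]\setminus\{i\}\}$. Take $Q \in \mathcal{P}_1\setminus\mathcal{A}$, so $Q = P_{\{a,b\}}$ with $a, b$ distinct elements of $[n]\setminus\{i\}$. For $P \in \mathcal{A}$, write $P = P_{\{i,c\}}$ with $c \neq i$. Then $\{Q, P, P'\}$ is a line with $P' \in \mathcal{A}$ precisely when $P' = Q \odot P = P_{\{a,b\}\triangle\{i,c\}}$ belongs to $\mathcal{A}$, which is to say when $|\{a,b\}\triangle\{i,c\}| = 2$ and this set contains $i$.

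Next, I run through the possible values of $c$. If $c \in \{a,b\}$, say $c = a$, then $\{a,b\}\triangle\{i,c\} = \{b,i\}$, which has two elements and contains $i$, so $P' = P_{\{i,b\}} \in \mathcal{A}$; symmetrically, $c = b$ gives $P' = P_{\{i,a\}} \in \mathcal{A}$. Otherwise $c \notin \{a,b,i\}$, and $\{a,b\}\triangle\{i,c\} = \{a,b,i,c\}$ has four elements, so $P' \notin \mathcal{A}$. Hence exactly two elements of $\mathcal{A}$, namely $P_{\{i,a\}}$ and $P_{\{i,b\}}$, can serve as $P$, and they pair up to form the single line $\langle P_{\{i,a\}}, P_{\{i,b\}}\rangle$ through $Q = P_{\{i,a\}}\odot P_{\{i,b\}}$.

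There is no real obstacle here: the argument is purely a case analysis on symmetric differences of two-element subsets of $[n]$. The only subtle point is interpreting ``precisely two $P, P' \in \mathcal{A}$'' as identifying a single unordered pair (the two distinct endpoints of the unique line of $\mathcal{P}(S)$ through $Q$ that meets $\mathcal{A}$ in more than one point), which the computation above confirms.
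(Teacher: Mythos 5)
Your proof is correct and matches the paper's approach: the paper disposes of this lemma with the words ``direct verification,'' and your case analysis on the symmetric difference $\{a,b\}\mathbin{\triangle}\{i,c\}$ is exactly that verification carried out explicitly, correctly identifying $P_{\{i,a\}}, P_{\{i,b\}}$ as the unique pair with $Q = P_{\{i,a\}}\odot P_{\{i,b\}}$. Your closing remark on reading ``precisely two $P,P'$'' as a single unordered pair is also the right interpretation.
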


\begin{proof}
Direct verification.
\end{proof}

Let ${\mathcal A}$ be the maximal clique of type (1) corresponding to a certain $i\in [n]$.
Then ${\mathcal P}_1\setminus {\mathcal A}$ is formed by 
all $P_I$, where $I$ is a $2$-element subset of $[n]\setminus \{i\}$.
Since the restriction of $f$ to ${\mathcal P}_1\cup{\mathcal P}_{2m-1}$
is an automorphism of $\Gamma$, there are the following four possibilities for $l({\mathcal A})$:
\begin{itemize}
\item
If  $l({\mathcal A})$ is a maximal clique of type (1),
then Lemmas \ref{lemma-obs1} and \ref{lemma-obs2} imply that 
$l({\mathcal P}_1\setminus {\mathcal A})$ is contained in ${\mathcal P}_1$ which means that 
$l({\mathcal P}_1)={\mathcal P}_1$ and, consequently, $f$ is induced by a coordinate permutation 
by Lemma \ref{lemma-p1}.

\item
If  $l({\mathcal A})$ is a maximal clique of type (2),
then $l({\mathcal P}_1\setminus {\mathcal A})$ is contained in ${\mathcal P}_1$ by 
Lemmas \ref{lemma-obs1} and \ref{lemma-obs2}. 

\item
If $l({\mathcal A})$ is a maximal clique of type (3),
then there is $j\in [n]\setminus\{i\}$ such that $l(P_{\{i,j\}})\in{\mathcal P}_{2m-1}$
and the remaining elements of $l({\mathcal A})$ belong to ${\mathcal P}_1$. 
Lemmas \ref{lemma-obs1} and \ref{lemma-obs2} show that $l(P_I)$ belongs to ${\mathcal P}_1$
for every $2$-element subset $I\subset [n]\setminus\{j\}$
and the remaining elements of $l({\mathcal P}_1)$ belong to ${\mathcal P}_{2m-1}$. 

\item
If $l({\mathcal A})$ is a maximal clique of type (4),
then there is $j\in [n]\setminus\{i\}$ such that $l(P_{\{i,j\}})\in{\mathcal P}_1$
and the remaining elements of $l({\mathcal A})$ belong to ${\mathcal P}_{2m-1}$.
Lemmas \ref{lemma-obs1} and \ref{lemma-obs2} imply that $l(P_I)$ belongs to ${\mathcal P}_1$
for every $2$-element subset $I\subset [n]\setminus \{i,j\}$
and the remaining elements of $l({\mathcal P}_1)$, except 
$l(P_{\{i,j\}})$, belong to ${\mathcal P}_{2m-1}$. 
\end{itemize}
So, we have to consider the following two cases:
\begin{enumerate}[(i)]
\item there is $i\in [n]$ such that 
$l(P_I)$ belongs to ${\mathcal P}_1$ for every $2$-element subset $I\subset [n]\setminus\{i\}$
and the remaining elements of $l({\mathcal P}_1)$ belong to ${\mathcal P}_{2m-1}$;
\item there are distinct $i,j\in [n]$ such that 
$l(P_I)$ belongs to ${\mathcal P}_1$ if $I=\{i,j\}$ or $I$ is a $2$-element subset of $[n]\setminus \{i,j\}$
and the remaining elements of $l({\mathcal P}_1)$ belong to ${\mathcal P}_{2m-1}$.
\end{enumerate}

{\it Case} (i). 
As in the previous subsection, we consider the restriction of $f$ to the set 
of all $P_I$ such that $I$ is a $2$-element subset of $[n]\setminus \{i\}$.
This restriction induces an embedding of $J(n-1,2)$ in $J(n,2)$.
Since $n\ge 8$, this embedding sends tops to tops and stars to subsets of stars 
which means that it is induced by an injection of $[n]\setminus \{i\}$ to $[n]$.
We can assume that this injection is identity, i.e. 
$l(P_I)=P_I$ for every $2$-element set $I\subset [n]\setminus \{i\}$. 
Taking  $l(e_j)=e_j$  for a certain $j\in [n]\setminus \{i\}$
we extend $l$ to a linear automorphism of $V$
and establish that $l(e_j)=e_j$ for all $j\in [n]\setminus \{i\}$.

Suppose that $l(e_i)=e_{I_i}$.
For every $j\in [n]\setminus \{i\}$ there is an $(n-2)$-element subset $I_j\subset [n]$ such that
$$e_{I_j}=l(e_{\{i,j\}})=e_j+e_{I_i}.$$
Then
$$I_j=I_i\setminus \{j\}\quad\text{if}\quad j\in I_i\qquad\text{and}\qquad 
I_j=I_i\cup \{j\}\quad\text{if}\quad j\not\in I_i.$$
Since $|I_i|>1$ (otherwise, $f({\mathcal P}_1)={\mathcal P}_1$), we take any $j\in \big([n]\setminus \{i\}\big)\cap I_i$
and establish that $|I_i|=n-1$. 
Furthermore,  $I_i=[n]\setminus \{i\}$
(indeed, if there is $j\in \big([n]\setminus \{i\}\big)\setminus I_i$, then
$I_j=I_i\cup \{j\}$ coincides with $[n]$ which is impossible).

So, $l(e_i)=e_{[n]\setminus \{i\}}$.
Then for every subset $I\subset [n]$ we have
$l(e_I)=e_I$ if $i\not\in I$ and 
$$l(e_I)=e_{[n]\setminus\{i\}}+e_{I\setminus\{i\}}=e_{[n]\setminus I}$$
if $i\in I$.
This implies that $l(e_I)=l(e_{[n]\setminus I})$, i.e.\ $l$ is not injective and we get a contradiction.

{\it Case} (ii). 
As in the previous case, we assume that $l(P_I)=P_I$ for every $2$-element subset $I\subset [n]\setminus\{i,j\}$.
Since $P_{\{i,j\}}$ 
is the unique point of the geometry ${\mathcal P}_1$
non-collinear to all $P_I\in {\mathcal P}_1$ with
$I\subset [n]\setminus\{i,j\}$,
we obtain that
\begin{equation}\label{eq-ij}
l(P_{\{i,j\}})=P_{\{i,j\}}.
\end{equation}
We extend $l$ to a linear automorphism of $V$ taking $l(e_t)=e_t$ for a certain $t\in [n]\setminus\{i,j\}$
and obtain that $l(e_t)=e_t$ for all $t\in [n]\setminus\{i,j\}$.
Then \eqref{eq-ij} shows that one of $l(e_i),l(e_j)$  is $e_{J\cup\{i\}}$ and the other is $e_{J\cup\{j\}}$
for a certain subset $J\subset [n]$ which contains neither of $i$ and $j$.
Using the fact that $l(P_{\{i,t\}})$ and $l(P_{\{j,t\}})$ belong to ${\mathcal P}_{2m-1}$ for all $t\in [n]\setminus\{i,j\}$
we establish that $$J=[n]\setminus\{i,j\}.$$
Therefore, one of $l(e_i),l(e_j)$  is $[n]\setminus\{i\}$ and the other is $[n]\setminus\{j\}$
which means that $l$ is $s_{ij}$ or $s'_{ij}$ (Example~\ref{exmp-lin3}).

\section{Remarks on non-binary case}
In this section, we assume that ${\mathbb F}$ is the $q$-element field with $q\ge 3$
and $V$ is the $n$-dimensional vector space ${\mathbb F}^n$ over ${\mathbb F}$.
For every integer $t$ satisfying $1\le t\le n$ we denote by $V_t$
the set of all vectors of $V$ whose Hamming weight is $t$. 
First of all, we observe that every $V_t$ spans $V$.
Indeed, for every $i\in [n]$ we can choose vectors 
$x=(x_1,\dots,x_n),y=(y_1,\dots,y_n)\in V_t$ such that $x_j=y_j$ if $j\ne i$ and $x_i-y_i=1$
which implies that $x-y=e_i$ (as above, $e_i$ is the vector from the standard basis of $V$ whose $i$-coordinate is non-zero).

Consider the geometry ${\mathcal H}_t$ whose points are $1$-dimensional subspaces 
spanned by vectors from $V_t$ and whose lines are lines of the projective space ${\mathcal P}(V)$
contained in ${\mathcal H}_t$.
We do not deal with all cases when such lines exist. Suppose that
$$
n=\frac{q^k-1}{q-1}\quad\text{and}\quad t=q^{k-1}.
$$
Then the maximal singular subspaces of ${\mathcal H}_t$
correspond to $q$-ary simplex codes of dimension $k$.
The geometry of $4$-ary simplex codes of dimension $2$ is investigated in \cite{KP-dim2}
and some remarks on the general case can be found in \cite[Section 6]{KP-JG}.

Two distinct points $\langle x\rangle, \langle y\rangle$ 
are collinear in ${\mathcal H}_t$  if and only if 
the $(2\times n)$-matrix $\binom{x}{y}$ 
can be extended to a generator matrix of a simplex code
which  is equivalent to the fact that in the matrix $\binom{x}{y}$ every non-zero column 
is proportional to precisely  $q^{k-2}$ columns 
including itself and the matrix contains precisely $\frac{q^{k-2}-1}{q-1}$ zero columns;
see \cite[Section 3]{KP-JG} for the details. 
Using this observation we can construct linearly independent $x,y\in V_t$ such that 
the Hamming distance between $x$ and $y$ is $t$ (i.e. $x-y\in V_t$),
but $\langle x \rangle, \langle y \rangle$ are non-collinear points of ${\mathcal H}_t$.
Also, we can find $x',y' \in V_t$ 
such that the line of ${\mathcal P}(V)$ connecting  $\langle x' \rangle$ and $\langle y' \rangle$
does not contain other points of ${\mathcal H}_t$.
Since lines of ${\mathcal P}(V)$ connecting non-collinear points of ${\mathcal H}_t$
can contain more than one point from ${\mathcal P}(V)\setminus {\mathcal H}_t$, we cannot use arguments
from Subsection 4.1.

The collinearity graph of ${\mathcal H}_t$ is connected.
The following example shows that the diameter of this graph 
can be greater than $2$.

\begin{exmp}
Suppose that $q=5$ and $k=2$.
Then
$$P=\langle (0,1,1,1,1,1)\rangle\quad\text{and}\quad P'=\langle (0,1,1,1,2,2)\rangle$$
are non-collinear points of ${\mathcal H}_t$. 
Show that there is no point of ${\mathcal H}_t$ collinear to both $P,P'$.
If such a point $Q=\langle (1,q_0,\ldots,q_4)\rangle$ exists
(it is clear that the first coordinates of vectors from $Q$ are non-zero), then
in each of the matrices 
$$
M=\begin{bmatrix}
0 & 1 & 1 & 1 &1 &1 \\
1 & q_0 & q_1& q_2 & q_3 & q_4
\end{bmatrix}, \qquad
M'=\begin{bmatrix}
0 & 1 & 1 & 1 &2 &2 \\
1 & q_0 & q_1& q_2 & q_3 & q_4
\end{bmatrix}$$
the columns are mutually non-proportional.
Hence $q_0,\ldots,q_4$ are mutually distinct elements of the field ${\mathbb F}$.
If one of $q_3,q_4$, say $q_3$, is zero, then the last column of $M'$ is proportional to
the second, third or fourth column of $M'$
(indeed, $2^{-1}q_4=3q_4$ is non-zero and distinct from $q_4$,
consequently, it coincides with $q_0,q_1$ or $q_2$), a contradiction.
In the case when $q_3, q_4\neq 0$, none of the last two columns 
is proportional to one of the remaining columns of $M'$ only if $3q_3=q_4$ and $3q_4=q_3$, but this implies that $3q_4=q_3=2q_4$ which is impossible.
\end{exmp}

\section{Appendix}
The {\it Johnson graph} $J(n,t)$ is the simple graph whose vertex set is formed by all $t$-element subsets of $[n]$ and two such subsets are connected by an edge if their intersection consists of $t-1$ elements.
This graph is connected and 
the complementary map $I\to [n]\setminus I$ induces an isomorphism between $J(n,t)$ and $J(n,n-t)$. 
If $t=1,n-1$, then $J(n,t)$ is a complete graph. In the case when $1<t<n-1$, each maximal clique of $J(n,t)$ is of one of the following two types:
\begin{itemize}
\item the {\it star} ${\mathcal S}(I)$ formed by all $t$-element subsets containing a fixed $(t-1)$-element subset $I$,
\item the {\it top} ${\mathcal T}(J)$ formed by all $t$-element subsets contained in a fixed $(t+1)$-element subset $J$.
\end{itemize}
Stars and tops contain $n-t+1$ and $t+1$ elements, respectively.
The complementary map sends stars to tops and tops to stars.

If $n\ne 2t$, then every automorphism of $J(n,t)$ is induced by a permutation on $[n]$.
In the case when $n=2t$, every automorphism of $J(n,t)$ is induced by a permutation on $[n]$
or it is the composition of the automorphism induced by a permutation and the complementary map.
In particular, if  $1<t<n-1$, then every automorphism of $J(n,t)$ preserving the types of maximal cliques is induced
by a permutation on $[n]$.

In the cases $n=4m\pm 1$ and $n=4m$ considered in subsections \ref{subsec:4m-pm-1} and \ref{subsec:4m} we make use of the simple fact involving embeddings of Johnson graphs.
\begin{fact}
For $m<n$, every embedding of $J(m,2)$ in $J(n,2)$ 
sending tops to tops and stars to subsets of stars is induced by an injection of $[m]$ to $[n]$.
\end{fact}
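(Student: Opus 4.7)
The plan is to construct the injection $\sigma:[m]\to[n]$ directly from the star-to-star condition. Assume $m\ge 3$ (the case $m=2$ is vacuous since $J(2,2)$ has a single vertex and any injection works). For each $i\in[m]$ the star ${\mathcal S}(\{i\})$ has $m-1\ge 2$ vertices, so $\phi({\mathcal S}(\{i\}))$ is a set of at least two distinct $2$-subsets of $[n]$ contained in some star ${\mathcal S}(\{a\})$ of $J(n,2)$. Since any two distinct $2$-subsets of $[n]$ share at most one element, the element $a$ is forced to be the unique element common to any two vertices of $\phi({\mathcal S}(\{i\}))$; I define $\sigma(i):=a$.

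Next I verify the equality $\phi(\{i,j\})=\{\sigma(i),\sigma(j)\}$. The vertex $\{i,j\}$ lies in both ${\mathcal S}(\{i\})$ and ${\mathcal S}(\{j\})$, so $\phi(\{i,j\})$ is contained in both ${\mathcal S}(\{\sigma(i)\})$ and ${\mathcal S}(\{\sigma(j)\})$, which forces $\phi(\{i,j\})\supseteq\{\sigma(i),\sigma(j)\}$. If $\sigma(i)\ne\sigma(j)$ this already gives the equality because $\phi(\{i,j\})$ has only two elements.

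The one remaining point is to prove that $\sigma$ is injective, and here I use the top-to-top condition. Given three distinct $i,j,k\in[m]$, the top ${\mathcal T}(\{i,j,k\})=\{\{i,j\},\{i,k\},\{j,k\}\}$ is mapped to some top ${\mathcal T}(\{a,b,c\})=\{\{a,b\},\{a,c\},\{b,c\}\}$ in $J(n,2)$ bijectively, since $\phi$ is injective. The images $\phi(\{i,j\})$ and $\phi(\{i,k\})$ both lie in ${\mathcal S}(\{\sigma(i)\})$, so $\sigma(i)$ is the unique element common to two distinct edges of the triangle $\{a,b,c\}$; this common element is a vertex of the triangle, hence $\sigma(i)\in\{a,b,c\}$. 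Symmetric reasoning for $j$ and $k$, together with the fact that a vertex of the triangle is determined by the pair of edges meeting at it, shows that $\sigma(i),\sigma(j),\sigma(k)$ are precisely $a,b,c$ in some order and are therefore pairwise distinct. Injectivity of $\sigma$ on every triple of $[m]$ gives injectivity of $\sigma$ on all of $[m]$, and then $\phi(\{i,j\})=\{\sigma(i),\sigma(j)\}$ follows for every $\{i,j\}\subset[m]$ as above.

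There is no serious obstacle here; the only subtlety is making sure $\sigma(i)$ is genuinely well-defined (which uses $m-1\ge 2$) and that one exploits the top-to-top hypothesis to separate $\sigma(i)$ from $\sigma(j)$. The star condition alone is not enough, because without the top condition two stars could in principle land in the same star of $J(n,2)$; it is the triangular rigidity of tops that pins the values of $\sigma$ on a triple to three distinct vertices of a triangle.
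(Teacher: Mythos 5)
Your proof is correct. Note that the paper offers no proof of this Fact at all --- it is stated in the Appendix as a ``simple fact'' and used without justification in Subsections \ref{subsec:4m-pm-1} and \ref{subsec:4m} --- so your argument fills a genuine gap rather than paralleling an existing one. The structure is the natural one and all steps check out: $\sigma(i)$ is well defined because $|{\mathcal S}(\{i\})|=m-1\ge 2$ and two distinct $2$-subsets lying in a common star ${\mathcal S}(\{a\})$ of $J(n,2)$ meet exactly in $a$; the containment $\{\sigma(i),\sigma(j)\}\subseteq\phi(\{i,j\})$ is immediate; and the top-to-top hypothesis, applied to the triangle ${\mathcal T}(\{i,j,k\})$, correctly forces $\sigma(i),\sigma(j),\sigma(k)$ to be the three distinct vertices of the image triangle, since distinct pairs of edges of a triangle meet in distinct vertices. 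It is worth observing that your argument uses only the injectivity of $\phi$ and the two clique conditions, never adjacency preservation itself, which makes it robust to the exact meaning of ``embedding.'' Your closing aside that the star condition alone is insufficient is true if an embedding is merely an injective homomorphism (one can then collapse everything into a single star), though under the induced-subgraph reading the star condition plus preservation of non-adjacency already rules that out for $m\ge 4$; in any case this remark is not load-bearing. Finally, the small cases are harmless: for $m=3$ a triple still exists so your argument goes through, and in the paper's applications $m\in\{n-2,\,n-1\}$ with $n\ge 7$, so $m\ge 5$ always.
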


To prove Proposition \ref{prop-conn} we need the following generalized version of 
the Johnson graph. 
Let $n,t,i$ be natural numbers such that 
$$1<t<n\quad\text{and}\quad\max\{0, 2t-n\}\le i<t.$$
Also, we require that $n>2t$ if $i=0$.
Consider the graph $J(n,t,i)$ whose vertices are $t$-element subsets of 
$[n]$ and two such subsets are connected by an edge if their intersection 
contains precisely $i$ elements.
This is the Johnson graph if $i=t-1$ and the Kneser graph if $n>2t$ and $i=0$
(in the case when $n=2t$ and $i=0$, we obtain the disjoint union of $K_2$).

\begin{lemma}\label{lemma-John}
The graph $J(n,t,i)$ is connected. 
\end{lemma}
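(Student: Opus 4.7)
The plan is to reduce connectivity of $J(n,t,i)$ to the existence of paths between ``single-swap neighbors,'' and then verify the reduction via an explicit short construction.

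More precisely, I will show the following key fact: for any $t$-subsets $A,A'\subset [n]$ with $|A\triangle A'|=2$ (i.e., $A' = (A\setminus\{a\})\cup\{b\}$ for some $a\in A$, $b\not\in A$), the vertices $A$ and $A'$ lie in the same connected component of $J(n,t,i)$. Once this is established, connectivity follows immediately: the Johnson graph $J(n,t,t-1)$, whose edges are exactly single swaps, is well known to be connected, so any two $t$-subsets $A,B$ can be linked by a sequence of single swaps, each of which is realized by a path in $J(n,t,i)$.

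To prove the key fact I split into two subcases according to $i$. When $i=t-1$, single-swap neighbors are already adjacent in $J(n,t,i)$. When $i=0$ (so $n>2t$ by assumption), the set $[n]\setminus(A\cup\{b\})$ has at least $n-t-1\ge t$ elements, so I can choose a $t$-subset $C\subset[n]\setminus(A\cup\{b\})$; then $C$ is disjoint from both $A$ and $A'$, so $A\to C\to A'$ is a path of length two in $J(n,t,0)$. When $1\le i\le t-2$, I take
$$C=\{a,b\}\cup S\cup X,$$
where $S$ is any $(i-1)$-element subset of $A\setminus\{a\}=A\cap A'$ and $X$ is any $(k-1)$-element subset of $[n]\setminus(A\cup\{b\})$, with $k=t-i$. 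The hypothesis $i\ge \max\{0,2t-n\}$ gives $n\ge 2t-i=t+k$, hence $|[n]\setminus(A\cup\{b\})|=n-t-1\ge k-1$, so the required $X$ exists. A direct count gives $|C|=2+(i-1)+(k-1)=t$, while
$$A\cap C=\{a\}\cup S,\qquad A'\cap C=\{b\}\cup S,$$
so $|A\cap C|=|A'\cap C|=i$. Therefore $A\to C\to A'$ is again a path of length two in $J(n,t,i)$.

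The construction is essentially routine once the correct $C$ is spotted; the only subtlety is the bookkeeping that guarantees enough ``outside room,'' which is precisely what the inequality $n\ge 2t-i$ (respectively $n>2t$ when $i=0$) supplies, so there is no genuine obstacle beyond verifying these size counts.
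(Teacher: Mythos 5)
Your proposal is correct and takes essentially the same route as the paper: both reduce connectivity to pairs that are adjacent in the Johnson graph $J(n,t,t-1)$ and then exhibit the identical common neighbor $\{a,b\}\cup S\cup X$ with $S$ an $(i-1)$-subset of the intersection and $X$ a $(t-i-1)$-subset outside $A\cup A'$, using the same bound $n-t-1\ge t-i-1$ coming from $i\ge\max\{0,2t-n\}$. The only cosmetic difference is that you handle $i=0$ by the same explicit two-step path through a disjoint $t$-set, whereas the paper disposes of the cases $i=0$ and $i=t-1$ by citing the known connectivity of the Kneser and Johnson graphs.
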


\begin{proof}
Since the Johnson and Kneser graphs are connected, we assume that 
$0<i<t-1$.
Let $X$ and $Y$ be distinct $t$-element subsets of $[n]$.
It is sufficient to show that $X$ and $Y$ are connected by a path in $J(n,t,i)$
when $|X\cap Y|=t-1$, i.e. $X,Y$ are adjacent vertices in the Johnson graph. 
Suppose that this happens and 
$$X\setminus Y=\{a\},\qquad Y\setminus X=\{b\}.$$
We take 
an $(i-1)$-element subset $A\subset X\cap Y$ and a $(t-i-1)$-element subset 
$$B\subset [n]\setminus (X\cup Y);$$
the second subset exists, since $\max\{0, 2t-n\}\le i$ and 
$$\bigl|[n]\setminus (X\cup Y)\bigr|=n-t-1\ge t-i-1.$$
Then 
$$Z=\{a,b\}\cup A\cup B$$
intersects each of $X,Y$ precisely in an $i$-element subset.
\end{proof}


\begin{thebibliography}{99}
\bibitem{Bonis} 
A. Bonisoli, {\it Every equidistant linear code is a sequence of dual Hamming codes},
Ars Combin. 18(1984), 181--186.

\bibitem{Cameron}
P.J. Cameron, {\it Embedding partial Steiner triple systems so that their automorphisms extend},
J. Comb. Des. 13(6), 466--470, 2005.

\bibitem{CGK}
I. Cardinali, L. Giuzzi, M. Kwiatkowski,
{\it On the Grassmann graph of linear codes}, 
Finite Fields Appl. 75 (2021) 101895.

\bibitem{CG}
I. Cardinali, L. Giuzzi,
{\it Grassmannians of codes}, Finite Fields Appl., 94(2024), art. 102342.


\bibitem{HP-book} 
W.C. Huffman, V. Pless, {\it Fundamentals of Error-Correcting Codes}, Cambridge University Press, 2003.


\bibitem{KP1} 
M. Kwiatkowski, M. Pankov, {\it On the distance between linear codes}, Finite Fields Appl. 39(2016), 251--263.

\bibitem{KP2} 
M. Kwiatkowski, M. Pankov, {\it Chow's theorem for linear codes}, Finite Fields Appl. 46(2017) 147--162.

\bibitem{KPP}
M. Kwiatkowski, M. Pankov, A. Pasini,
{\it The graphs of projective codes},  Finite Fields Appl. 54(2018), 15--29.

\bibitem{KP-dim2}
M. Kwiatkowski, M. Pankov,
{\it The graph of 4-ary simplex codes of dimension 2}, Finite Fields Appl. 67(2020), art. 101709.

\bibitem{KP-JG}
M. Kwiatkowski, M. Pankov,
{\it On maximal cliques in the graph of simplex codes}, J. Geom. 115(2024), art. 10.

\bibitem{Pankov-book} M. Pankov,
{\it Geometry of semilinear embeddings:Relations to graphs and codes}, World Scientific 2015. 

\bibitem{Pank}
M. Pankov, {\it The graphs of non-degenerate linear codes}, J. Comb. Theory, Ser.  A 195(2023), art. 105720.


\bibitem{embPSTS} 
C.A. Rodger, S.J. Stubbs {\it Embedding partial triple systems}, J. Comb. Theory Ser. A. 44, 241--252, 2014.

\bibitem{Ryser}
H.J. Ryser,  {\it An extension of a theorem of de Bruijn and Erd\H{o}s on combinatorial
designs}, Journal of Algebra 10 (1968), 246-261.

\bibitem{TVN} 
M. Tsfasman, S. Vl\v{a}du\c{t}, D. Nogin, {\it Algebraic Geometry Codes. Basic notions}, Amer. Math. Soc., Providence, 2007.

\bibitem{Ward} H.N. Ward {\it An Introduction to Divisible Codes}, 
Designs, Codes and Cryptography, 17(1999), 73-79.



\end{thebibliography}
\end{document}